\documentclass[12pt,a4paper]{article}
\usepackage{fullpage}
\usepackage[affil-it]{authblk}
\usepackage{amsmath,amsfonts,amssymb,amsthm,amscd}
\newtheorem{Theorem}{Theorem}[section]

\newtheorem{Lemma}[Theorem]{Lemma}
\newtheorem{Corollary}[Theorem]{Corollary}

\newtheorem{Example}[Theorem]{Example}

\newcommand{\ZI}{\mathbb{Z}_{\mathbb{I}}}

\DeclareMathOperator{\lcm}{lcm}
\DeclareMathOperator{\ord}{ord\ }

\vskip1cm

\title{An explicit Baker type lower bound of exponential values}
\author{Anne-Maria Ernvall-Hyt\"onen\footnote{Department of Mathematics and Statistics, University of Helsinki}, Kalle Lepp\"al\"a\footnote{Department of Mathematical Sciences, University of Oulu} and Tapani Matala-aho\footnote{Department of Mathematical Sciences, University of Oulu} \footnote{The work of all authors was supported by the Academy of Finland, grant 138522, and the work of A.-M. E.-H. was also supported by the Academy of Finland grant 138337.}
}
\begin{document}
\maketitle
\date{}

\begin{abstract}
Let $\mathbb{I}$ denote an imaginary quadratic field or the field $\mathbb{Q}$
of rational numbers and $\mathbb{Z}_{\mathbb{I}}$ its ring of intergers.
We shall prove an explicit Baker type lower bound for
$\mathbb{Z}_{\mathbb{I}}$-linear form of the numbers
\begin{equation}\label{1}
1,\ e^{\alpha_1},...,\ e^{\alpha_m},\quad m\ge 2,
\end{equation}
where $\alpha_0=0$, $\alpha_1,...,\alpha_m$,
are $m+1$ different  numbers from the field $\mathbb{I}$.
Our work  gives gives some improvements to  the existing explicit versions of
of Baker's work about exponential values at rational points. In particilar,
dependences on $m$ are improved.

2010 Mathematics Subject Classification: 11J82, 11J72.
\end{abstract}

\section{Introduction}

Let $\mathbb{I}$ denote an imaginary quadratic field or the field $\mathbb{Q}$
of rational numbers and $\mathbb{Z}_{\mathbb{I}}$ its ring of intergers.
Our target is to prove an explicit Baker type lower bound for
$\mathbb{Z}_{\mathbb{I}}$-linear form of the numbers
\begin{equation}\label{1}
1,\ e^{\alpha_1},...,\ e^{\alpha_m},\quad m\ge 2,
\end{equation}
where $\alpha_0=0$, $\alpha_1,...,\alpha_m$,
are $m+1$ different  numbers from the field $\mathbb{I}$, i. e., we want to get a lower bound for the expression
\[
\left|\beta_1+\beta_1e^{\alpha_1}+\cdots +\beta_me^{\alpha_m}\right|,
\]
where $\beta_0,\beta_1,\dots ,\beta_m\in \mathbb{Z}_{\mathbb{I}}$. We will present the new explicit Baker type lower bound
\begin{equation*}
\left|\beta_0+\beta_1 e+\beta_1 e^{2}+...+\beta_me^{m}\right| >
\frac{1}{h^{1+\hat \epsilon(h)}},
\end{equation*}
valid for all
$$\overline{\beta}=(\beta_0,...,\beta_m)^T \in \mathbb{Z}_{\mathbb{I}}^{m}\setminus\{\overline{0}\},
\quad h_i= \max\{1,|\beta_i|\}, \quad  h=h_1\cdots h_m,$$
with
\begin{equation}\label{Aepsilon}
\hat \epsilon(h)=\frac{(4+7m)\sqrt{\log(m+1)}}{\sqrt{\log\log h}},
\end{equation}
\begin{equation*}
\log h \ge
m^2(41\log(m+1)+10)e^{m^2(81\log(m+1)+20)}.
\end{equation*}

With the assumption that $\gamma_0, \gamma_1,...,\gamma_m\in\mathbb{Q}^*$ are distinct,
Baker  \cite{BAKER} proved that there exist  positive constants $\delta_1,\delta_2$ and $\delta_3$ such that
\begin{equation}\label{Bakerresult}
\left|\beta_0 e^{\gamma_0}+...+\beta_m e^{\gamma_m}\right| >
\frac{\delta_1 M^{1-\delta(M)}}{\prod_{i=0}^{m} h_i},
\end{equation}
for all
$$\overline{\beta}=(\beta_0,...,\beta_m)^T \in \mathbb{Z}^{m}\setminus\{\overline{0}\},
\quad h_i= \max\{1,|\beta_i|\},$$
with
\begin{equation}\label{Bepsilon}
\delta(M)\le \frac{\delta_2}{\sqrt{\log\log M}},\quad
M=\underset{0\le i\le m}\max\{|\beta_i|\}\ge \delta_3>e.
\end{equation}
However, the constants $\delta_1,\delta_2,\delta_3$ in (\ref{Bakerresult}) are not explicitly given. Baker's work is based on the first kind Pad\'e-type approximations
for the vector $(e^{\alpha_1 t},...,e^{\alpha_m t})$, constructed by the Siegel's (Thue-Siegel's) lemma.

With similar methods, but using a more refined version of Siegel's lemma, together with careful analysis and estimates, and assuming that $\gamma_0, \gamma_1,...,\gamma_m\in\mathbb{Q}$ are distinct,
Mahler  \cite{MAHLER} made Baker's result completely explicit by proving that
\begin{equation}\label{Mahlerresult}
\left|\beta_0 e^{\gamma_0}+\beta_1 e^{\gamma_1}+...+\beta_m e^{\gamma_m}\right| >
\frac{M^{1-\delta(M)}}{\prod_{i=0}^{m} h_i},
\end{equation}
for all
$$\overline{\beta}=(\beta_0,...,\beta_m)^T \in \mathbb{Z}^{m+1}\setminus\{\overline{0}\}$$
with
\begin{equation}\label{Mepsilon}
\delta(M)\le \frac{12(m+1)^3\sqrt{\log B}}{\sqrt{\log\log M}},\quad
M=\underset{0\le i\le m}\max\{|\beta_i|\},
\end{equation}
\begin{equation*}
\log M \ge (16(m+1)^4\log B) e^{16(m+1)^4\log B},\quad
B=\underset{0\le i\le m}{\textrm{lcd}}\{\gamma_i\}(1+\underset{0\le i\le m}\max\{|\gamma_i|\}),
\end{equation*}
where lcd$\{\gamma_i\}$ denotes the least common denominator of the numbers $\gamma_i$.

Later, V\"a\"an\"anen and Zudilin \cite{VAAZUD} proved
Baker-type results for certain $q$-series solutions of $q$-difference equations.
V\"a\"an\"anen and Zudilin applied Pad\'e-type approximations of the second kind
with a  method similar to a one Baker \cite{BAKER} already used in earlier studies of hypergeometric series,
see e.g. \cite{ZUDILIN} and see \cite{FELDMAN} for an entire collection of relevant references.
Thereafter, Sankilampi \cite{SANKILAMPI} adapted the method of the Pad\'e-type approximations of the second kind
used in \cite{VAAZUD}, and  proved explicit Baker type results of exponential values comparable to Mahler's results.

Baker \cite{BAKER} and Mahler \cite{MAHLER} give their results
in the field of rational numbers while, Sankilampi \cite{SANKILAMPI} gives the results in  an arbitrary
imaginary quadratic field $\mathbb{I}$.
However, both Mahler and Sankilampi give their explicit results only in the field of rational numbers. Our results improve these. We will discuss and compare the results in Chapter \ref{Results}.

We start our considerations by introducing a refined version
of Siegel's lemma, Lemma \ref{Thuemax}, for an imaginary quadratic field
$\mathbb{Q}(\sqrt{-D})$, where $D\in \mathbb{Z}^+$ .
Our result improves the $D$ dependence
compared to the earlier results, see Bombieri \cite{BOMBIERI}.

We will use our version of Siegel's lemma with Pad\'e-type approximations of the second kind to construct
simultaneous numerical linear forms over a ring $\mathbb{Z}_{\mathbb{I}}$.
At this point we apply the axiomatic method from  \cite{TM}.
Sequently, we achieve a fully Baker type lower bound, Theorem  \ref{BAKERTYPE},
for the exponential values  (\ref{1}) over  an arbitrary imaginary quadratic field, or the field of rational numbers, ${\mathbb{I}}$.

\section{Results and comparision}\label{Results}

\subsection{Results}

Let $\alpha_j =x_j/y_j \in \mathbb{I}$ be $m+1$ different numbers with 
$x_j \in \ZI$, $y_j \in \mathbb{Z}^+$ and $\gcd(x_j,y_j)=1$, when $j=0,1,\ldots,m$.
Denote
\begin{equation*}
\overline{\alpha}=(\alpha_0,\alpha_1,...,\alpha_m)^T,\quad
g_1(\overline{\alpha})=\lcm(y_0,\ldots,y_m),\quad 
g_2(\overline{\alpha})=\underset{0\le j\le m}\max\{|x_j|+|y_j|\},\quad
\end{equation*}
\begin{equation*}
g_3(\overline{\alpha})=\underset{0\le j\le m}\max|\alpha_j|,\quad 
g_4(\overline{\alpha})=\underset{1\le j\le m}\max\{1+\frac{|y_j|}{|x_j|}\}.
\end{equation*}
From the definitions we get
\begin{equation}\label{gggggg} 
2\le g_2,\quad \max\{g_4, 1+g_3\} \le g_2\le g_1(1+g_3)\le 2g_1\max\{1,g_3\},\quad g_1\le g_2^m,
\end{equation}
where $\overline{\alpha}=(0,\alpha_1,...,\alpha_m)^T\ne \overline{0}$ 
and $g_i=g_i(\overline{\alpha})$. Furthermore, write
\begin{equation*}
b_0=\sqrt{\log g_2}+\frac{\log g_4}{2\sqrt{\log g_2}},\quad 
e_0=3\sqrt{\log g_2}+\frac{\log{g_4}}{2\sqrt{\log g_2}},
\end{equation*}
and
\begin{alignat*}{1}
b_1&=\max\{0,\log g_1-\log g_2-\log g_4\};\\
e_1&=\max\{0,\log g_1+2\log(1+g_3)+2\log 2+1-\log g_2-\log g_4\}.
\end{alignat*}

Set now
\begin{alignat*}{3}
&A=b_0+e_0m,  \quad  &B=1+b_0+b_1+e_1m, \quad\quad  C=m,&\\
&D=b_0m+e_0m^2, \quad &E=(1+b_0+b_1)m+(2e_0+e_1)m^2. &
\end{alignat*}
Next we introduce the function 
$z:\mathbb{R}\to \mathbb{R}$, 
the inverse function of the function 
$y(z) = z \log z$, $z \geq e$, considered in detail in \cite{HANCLETAL}.
Using this function, we define
\begin{equation*}
\xi(z,H):=
A\left(2\frac{z(2 \log H)}{\log H}\right)^{1/2}+
B\frac{ z(2 \log H)}{\log H}+
C\frac{\log z(2 \log H)}{ \log H}
+D\frac{(\log z(2 \log H))^{1/2}}{\log H}.
\end{equation*}
Write then
\begin{equation}\label{valinta}
H_0=\max\{ e^{(\gamma\log\gamma)/2},2\log\frac{s_{\mathbb{I}}}{t_{\mathbb{I}}} \},
\quad \log\gamma=(3me_0)^2,
\end{equation}
where $s_{\mathbb{I}}$ and $t_{\mathbb{I}}$ are constants coming from Siegel's lemma, see formula (\ref{stsiegel}).

We are now ready to formulate the first theorem.
\begin{Theorem}\label{BAKERTYPE}
Let $m\ge 2$. With $\alpha_0=0$ and the notations above, we have
\begin{equation}\label{BAKERLOWER}
\left|\beta_0+\beta_1 e^{\alpha_1}+...+\beta_me^{\alpha_m}\right| > 
\frac{1}{2e^{E}H^{1+\epsilon(H)}}
\end{equation}
for all 
$$\overline{\beta}=(\beta_0,\beta_1,...,\beta_m)^T \in 
\mathbb{Z}_{\mathbb{I}}^{m+1}\setminus\{\overline{0}\}$$
and
\begin{equation}\label{HGREATERF}
H=\prod_{i=1}^{m}(2mH_i)\ge H_0,\quad  H_i\ge h_i=\max\{1,|\beta_i|\} 
\end{equation}
with an error term
\begin{equation}\label{AKTepsilon}
\epsilon(H)=\xi(z,H).
\end{equation}
\end{Theorem}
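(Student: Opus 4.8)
\medskip
\noindent\emph{Proof plan.} The plan is to realize the classical Hermite--Baker scheme in the axiomatic form of \cite{TM}. From a free integer parameter $\nu$ I would manufacture $m+1$ numerical linear forms $L_k=B_{k0}+B_{k1}e^{\alpha_1}+\cdots+B_{km}e^{\alpha_m}$, $k=0,\dots,m$, with coefficients $B_{kj}\in\ZI$, that are simultaneously (i) small, (ii) not too large, and (iii) have a nonsingular coefficient matrix $(B_{kj})$; then check that these meet the size, remainder and non-degeneracy axioms of the linear-independence machinery of \cite{TM} with exactly the explicit constants $b_0,e_0,b_1,e_1$ (hence $A,B,C,D,E$) introduced before the statement; invoke that machinery; and finally tune $\nu$ near-optimally. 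The genuinely new work is to carry out the construction \emph{explicitly} while keeping the dependence on $m$ as sharp as \eqref{AKTepsilon}, which is exactly the purpose of the refined Siegel lemma, Lemma~\ref{Thuemax}.

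\medskip
\noindent\emph{Step 1: the Pad\'e-type construction.} Fix $\nu\in\Z^{+}$ and a vanishing order $r=(m+1)\nu-s$ just below the perfect value $(m+1)\nu$, and construct Pad\'e-type approximations of the second kind for $e^{\alpha_0 t},e^{\alpha_1 t},\dots,e^{\alpha_m t}$ (with $\alpha_0=0$): polynomials $Q_0,\dots,Q_m$ of degree $\le\nu$ for which $R(t)=Q_0(t)+Q_1(t)e^{\alpha_1 t}+\cdots+Q_m(t)e^{\alpha_m t}$ vanishes at $t=0$ to order $r$. Expanding each $Q_j$ in the divided-power basis $t^{i}/i!$, the $r$ vanishing conditions become a homogeneous $\ZI$-linear system in the coefficients whose matrix, after multiplying the $\ell$-th equation by $g_1^{\ell}$ with $g_1=\lcm(y_0,\dots,y_m)$, has entries $\binom{\ell}{i}g_1^{i}(g_1/y_j)^{\ell-i}x_j^{\ell-i}$ and hence of size $\exp\bigl(O(\nu\log(g_1g_2))\bigr)$ --- crucially with no spurious factorial. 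Lemma~\ref{Thuemax}, whose constants $s_{\mathbb I},t_{\mathbb I}$ are those of \eqref{stsiegel}, then supplies a nonzero solution whose coefficients are bounded by a power, with exponent $r/\bigl((m+1)(\nu+1)-r\bigr)$ (the Siegel exponent), of the matrix size. Differentiating $R$ up to order $m$, evaluating at $t=1$, and clearing the remaining denominators by $\nu!$ and by suitable powers of $g_1$ gives the $m+1$ forms $L_k$. I would then record two explicit families of estimates: $\log\max_{k,j}|B_{kj}|$ equals $\nu\log\nu$ plus (Siegel exponent)$\cdot O(\nu\log(g_1g_2))$ plus $O(\nu\log g_3)$, while $\log|L_k|$ equals $\nu\log\nu-r\log r$ plus (Siegel exponent)$\cdot O(\nu\log(g_1g_2))$ plus $O(r\log g_3)$, the decisive term $-r\log r$ coming from the tail $\sum_{\ell\ge r}(\text{bounded})/\ell!$ of the remainder series of $R$ and of its derivatives at $t=1$. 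The quantities $b_0,e_0$ collect the $\nu\log\nu$- and $\sqrt{\log g_2}$-type contributions that survive once the slack $s$ is optimized, while $b_1,e_1$ collect the $\log g_1$ contributions; passing the $m+1$ forms through the $m$-fold determinant below turns these into $A,B,C,D,E$, and the chain \eqref{gggggg} among $g_1,g_2,g_3,g_4$ is what lets one repackage everything in terms of $g_2,g_4$ alone, as in the definitions of $b_0,e_0$.

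\medskip
\noindent\emph{Step 2: non-degeneracy and the linear-forms argument.} Next I would verify (iii): $(B_{kj})_{0\le k,j\le m}$ is nonsingular. For the exponential function this is the classical perfectness of the Hermite system, which reduces to the non-vanishing of a confluent Vandermonde determinant in the distinct numbers $\alpha_0=0,\alpha_1,\dots,\alpha_m\in\mathbb I$. Granting this, the standard argument is: given $\overline{\beta}\ne\overline{0}$, replace a suitable row of $(B_{kj})$ by $\overline{\beta}^{\,T}$ to get a matrix $B'$ with $\det B'\in\ZI\setminus\{\overline{0}\}$, so $|\det B'|\ge1$; a column operation (using $\alpha_0=0$) turns the zeroth column of $B'$ into the vector of numerical forms, one entry of which is $L=\beta_0+\beta_1e^{\alpha_1}+\cdots+\beta_me^{\alpha_m}$; Laplace expansion along that column, followed by a second expansion along the $\overline{\beta}$-row of each mixed minor, bounds $|\det B'|$ by $|L|$ times an $m\times m$ minor of $(B_{kj})$ plus $\max_k|L_k|$ times $H=\prod_{i=1}^{m}(2mH_i)$ times $(m-1)\times(m-1)$ minors of $(B_{kj})$. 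Keeping the product $H$ here rather than a power of $\max_iH_i$ is exactly what that second expansion, organised as in \cite{TM}, delivers. Choosing $\nu=\nu^{*}$ minimal so that the second term is below an explicit constant forces a contradiction with $|\det B'|\ge1$ as soon as $|L|<1/(2e^{E}H^{1+\epsilon})$, which is the asserted bound \eqref{BAKERLOWER}.

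\medskip
\noindent\emph{Step 3: the optimization, and the main obstacle.} It remains to choose $s$ and $\nu^{*}$. Balancing the $\nu\log\nu$ term in $\log\max_{k,j}|B_{kj}|$ against the (Siegel exponent)$\cdot\nu\log(g_1g_2)$ term forces the slack fraction $s/\nu$ to be of size $\sqrt{\log(g_1g_2)/\log\log H}$; this is the origin both of the square roots in $\xi(z,H)$ and of the square $(3me_0)^2$ in \eqref{valinta}. The Step~2 requirement that the forms beat $H$ then pins $\nu^{*}\log\nu^{*}\asymp 2\log H$, that is $\nu^{*}\approx z(2\log H)$ with $z$ the inverse of $z\log z$ from \cite{HANCLETAL}; substituting this $\nu^{*}$ and the optimal slack into $\log\max_{k,j}|B_{kj}|$ reproduces exactly the four terms of $\xi(z,H)$, with $A,B,C,D$ attached respectively to $\bigl(2z(2\log H)/\log H\bigr)^{1/2}$, $z(2\log H)/\log H$, $\log z(2\log H)/\log H$ and $(\log z(2\log H))^{1/2}/\log H$. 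The hypothesis $H\ge H_0$ is precisely the statement that the near-optimal \emph{integer} $\nu^{*}$ exceeds the threshold $\gamma=e^{(3me_0)^2}$ that all the intermediate estimates require (positivity of the degrees, applicability of Lemma~\ref{Thuemax}, slack fraction below $1$, and $\epsilon(H)$ bounded). The hard part is the explicit bookkeeping of Step~1: one must use the divided-power basis (or an equivalent device) to keep the linear system free of factorial-size entries, keep $\log g_1$ segregated from $\log g_2$ so that it enters only the secondary constants $B,E$ and not the main constant $A$, and exploit the sharp form of Lemma~\ref{Thuemax}, with its improved discriminant dependence, so that the $m$-dependence of the final error term comes out linear in $m$, as in \eqref{AKTepsilon}, rather than a higher power.
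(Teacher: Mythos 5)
Your plan diverges from the paper in a way that creates real gaps, not just cosmetic ones. The paper's proof builds Hermite--Pad\'e approximations genuinely of the \emph{second} kind: one common polynomial $A_{0,0}$ and separate relations $A_{0,0}(t)e^{\alpha_j t}+A_{0,j}(t)=R_{0,j}(t)$ with a \emph{vector} of parameters $\overline{l}=(l_1,\dots,l_m)$ and individual vanishing orders $L+\nu_j+1$ (Lemma~\ref{existence}, choice (\ref{CHOOSEPARAMETERS})); after differentiating and selecting indices, the numerical forms have exactly the shape $L_{k,j}=B_{k,0}e^{\alpha_j}+B_{k,j}$ demanded by the axiomatic theorem of \cite{TM} (see (\ref{ASSUMPTION1})), with per-$j$ remainder bounds $-r_j(\overline{l})=(dL-cl_j)\log L+\dots$ (Lemma~\ref{Estimates2}), and Theorem~\ref{BAKERAXIOM2} is then invoked with $a=c=1$, $d=0$. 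What you construct, despite the label ``second kind,'' is a first-kind system: a single form $Q_0+Q_1e^{\alpha_1 t}+\cdots+Q_me^{\alpha_m t}$ of high vanishing order with one degree parameter $\nu$ and one remainder. Such forms do not satisfy the hypotheses of \cite{TM}, so you cannot ``invoke that machinery''; and your substitute determinant argument with a single $\nu$ cannot yield the product $H=\prod_{i=1}^m(2mH_i)$: expanding the mixed minors along the $\overline{\beta}$-row bounds them by $m\max_iH_i$ times coefficient bounds, not by $\prod_iH_i$, unless you introduce individual degree parameters tied to each $H_i$ --- which is precisely the role of $\overline{l}$, $\nu_j$ and $r_j(\overline{l})$ in the paper, and is absent from your setup. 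Relatedly, the claim that your bookkeeping ``reproduces exactly'' $b_0,e_0,b_1,e_1$ and hence $A,\dots,E$ is unsupported: these constants are computed in Lemma~\ref{Estimates2} from the type-II system, where Siegel's lemma is applied to $L+1$ unknowns (the coefficients of $A_{0,0}$ alone) and $M=\sum\nu_j$ equations; your type-I system has a different unknown/equation count, hence a different Siegel exponent, so identical constants cannot be asserted without redoing the entire optimization.

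A second concrete gap is non-degeneracy. You appeal to ``classical perfectness of the Hermite system'' to claim the matrix of the $m+1$ forms obtained from derivatives of orders $0,\dots,m$ at $t=1$ is nonsingular. But your coefficients (like the paper's) come from Siegel's lemma, not from explicit Hermite formulas, so perfectness is not available, and that matrix can well be singular. The paper's Lemma~\ref{DETERMINANTTI} proves only the weaker statement that \emph{some} $m+1$ derivative indices $s(0),\dots,s(m)$ with $s(k)\le L-M+m(m+1)/2$ give a nonzero determinant, via the order/degree count (\ref{DETPOL}), the Vandermonde leading coefficient, and the cofactor identity $\Delta(t)\overline{e}(t)=\mathcal{C}(t)\overline{w}_0(t)$; the resulting slack $s(k)$ then has to be carried through the remainder estimate (the factor $(L+\nu_j+1-s(k))!$ in (\ref{Upperboundrem})) and is one source of the $C$ and $D$ terms in $\xi(z,H)$. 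Your plan both misstates the non-degeneracy step and omits this bookkeeping, so as written it does not establish the theorem with the stated $\epsilon(H)$, $H_0$ and $E$.
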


Consider the error term (\ref{AKTepsilon}) as a function of $H$. We can easily see that it is better than the corresponding term (\ref{Bepsilon}) given by Baker  \cite{BAKER}, 
and the corresponding terms in Mahler  \cite{MAHLER} and Sankilampi  \cite{SANKILAMPI}.
Namely, notice first that 
\begin{equation}\label{zfloghz2e}
z(2 \log H)< z_2(2 \log H):=\frac{2 \log H}{\log \frac{2 \log H}{\log(2 \log H)}}
\end{equation}
by (\ref{z1z3zz2z0}). 
Then, see  \cite{TM}, this may be further estimated by
\begin{equation}\label{EPSILON2xi2e}
z_2(2 \log H)\le 2\frac{\log(\gamma\log\gamma)}{\log(\gamma\log\gamma)-\log\log(\gamma\log\gamma)}
\left(1-\frac{\log 2}{\log (2\log H)}\right)\frac{\log H}{\log\log H}<
\end{equation}
\begin{equation}\label{WEAKERe}
2\rho\frac{\log H}{\log\log H},
\end{equation}
for all $H\ge H_0$ with $\rho=1.024$.

Hence, using the bound (\ref{WEAKERe}), we get the following corollary. It is of the same shape but weaker than (\ref{BAKERLOWER})
or the bounds obtained using (\ref{zfloghz2e}) or (\ref{EPSILON2xi2e}).

\begin{Corollary}\label{Corollary2.2}
If
$H\ge H_0,$
then
$$
\left|\beta_0+\beta_1 e^{\alpha_1}+...+\beta_me^{\alpha_m}\right| > 
$$
\begin{equation}\label{USUAL}
\frac{1}{2e^{E}(2\rho)^{C}}\left(\frac{\log\log H}{\log H}\right)^{C}
H^{-1-
\frac{2A\sqrt{\rho}}{\sqrt{\log\log H}}-
\frac{2B\rho}{\log  \log H}
-\frac{D}{\log H}\sqrt{\log\left( \frac{2\rho \log H}{\log\log H}\right)} }.
\end{equation}
\end{Corollary}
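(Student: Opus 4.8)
The plan is to derive Corollary \ref{Corollary2.2} directly from Theorem \ref{BAKERTYPE} by substituting the weaker but cleaner upper bound for $z(2\log H)$ into the error term $\epsilon(H)=\xi(z,H)$. The only mathematical content beyond Theorem \ref{BAKERTYPE} is the chain of estimates \eqref{zfloghz2e}--\eqref{WEAKERe}, which bounds $z(2\log H)$ above by $2\rho\,\tfrac{\log H}{\log\log H}$ with $\rho=1.024$, valid for all $H\ge H_0$; I will take that chain as given. So the proof is essentially a monotonicity-and-substitution argument applied termwise to the four summands of $\xi(z,H)$.

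First I would recall the definition
\[
\xi(z,H)=A\Bigl(2\tfrac{z(2\log H)}{\log H}\Bigr)^{1/2}
+B\tfrac{z(2\log H)}{\log H}
+C\tfrac{\log z(2\log H)}{\log H}
+D\tfrac{(\log z(2\log H))^{1/2}}{\log H},
\]
and note that each of the four coefficients $A,B,C,D$ is nonnegative and each term is an increasing function of $z(2\log H)$. Hence replacing $z(2\log H)$ by the larger quantity $2\rho\,\tfrac{\log H}{\log\log H}$ (legitimate for $H\ge H_0$ by \eqref{WEAKERe}) only increases $\xi(z,H)$, and therefore only weakens the lower bound \eqref{BAKERLOWER}. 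Substituting term by term: the first term becomes $A\bigl(2\cdot 2\rho/\log\log H\bigr)^{1/2}=2A\sqrt{\rho}/\sqrt{\log\log H}$; the second becomes $2B\rho/\log\log H$; the third becomes $(C/\log H)\log\!\bigl(2\rho\log H/\log\log H\bigr)$; the fourth becomes $(D/\log H)\sqrt{\log\!\bigl(2\rho\log H/\log\log H\bigr)}$.

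The only slightly delicate point is bookkeeping for the third term: writing $\log\!\bigl(2\rho\log H/\log\log H\bigr)=\log(2\rho)+\log\bigl(\log H/\log\log H\bigr)$, the part $(C/\log H)\log(2\rho)$ contributes, via $H^{-(C/\log H)\log(2\rho)}=(2\rho)^{-C}$, the constant factor $(2\rho)^{-C}$ in front; and the part $(C/\log H)\log\bigl(\log H/\log\log H\bigr)$ contributes, via $H^{-(C/\log H)\log(\log H/\log\log H)}=(\log\log H/\log H)^{C}$, the polynomial-in-$\log H$ factor $\bigl(\log\log H/\log H\bigr)^{C}$. Pulling these two factors out of the exponent and multiplying by the $2e^{E}$ already present in \eqref{BAKERLOWER} gives the prefactor $\dfrac{1}{2e^{E}(2\rho)^{C}}\Bigl(\dfrac{\log\log H}{\log H}\Bigr)^{C}$ of \eqref{USUAL}, while the leftover fourth-term exponent may be kept as $\sqrt{\log\!\bigl(2\rho\log H/\log\log H\bigr)}$, matching \eqref{USUAL} exactly. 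This completes the derivation; there is no real obstacle, only the need to route each piece of the enlarged $\xi$ into the correct slot of \eqref{USUAL}. $\hfill\square$
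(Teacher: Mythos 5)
Your proof is correct and follows essentially the same route as the paper: apply Theorem \ref{BAKERTYPE} and substitute the bound $z(2\log H)<2\rho\,\log H/\log\log H$ from \eqref{zfloghz2e}--\eqref{WEAKERe} termwise into $\xi(z,H)$, pulling the $C$-term contribution out of the exponent as the factor $(2\rho)^{-C}(\log\log H/\log H)^{C}$. Your explicit bookkeeping of that third term is exactly what the paper leaves implicit, so there is nothing to add.
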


For later purposes we give the following result.

\begin{Corollary}\label{Corollary2.3}
With the notations above, we have
\begin{equation}\label{SENCE}
\left|\beta_0+\beta_1 e^{\alpha_1}+...+\beta_me^{\alpha_m}\right| > 
\frac{1}{\hat H^{1+\hat \epsilon(\hat H)}},\quad\textrm{with}\quad
\hat \epsilon(\hat H)=\hat A(\overline{\alpha})/\sqrt{\log\log \hat H}
\end{equation}
where
\begin{equation}\label{ahatA}
\hat A(\overline{\alpha})
\le 1+(3.036+7.084m)\sqrt{\log g_2}+0.633\sqrt{m}+0.580\sqrt{m}\sqrt{\log(1+g_3)},
\end{equation}
when $g_1(\overline{\alpha})\le g_2(\overline{\alpha})g_4(\overline{\alpha})$, and 
\[
\hat A(\overline{\alpha})
\le   1+(3.036+7.084m)\sqrt{\log g_2}+0.633\sqrt m +(0.290+0.410\sqrt{m})\sqrt{\log(g_1(1+g_3))}, \label{bhatA}
\]
when $g_1(\overline{\alpha})> g_2(\overline{\alpha})g_4(\overline{\alpha})$, 
for all
\begin{equation*}
\hat H:=\prod_{i=1}^{m}H_i\ge (2m)^{-m}H_0:=\hat H_0(\overline{\alpha}).
\end{equation*}
Further, $\log\hat H_0(\overline{\alpha})\le \frac{1}{2}(3me_0)^2e^{(3me_0)^2}$ if $2\log\left(2\log\frac{s_{\mathbb{I}}}{t_{\mathbb{I}}}\right)\le (3me_0)^2e^{(3me_0)^2}$.
\end{Corollary}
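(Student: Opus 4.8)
The plan is to read off Corollary~\ref{Corollary2.3} from Corollary~\ref{Corollary2.2} by setting $\hat H=(2m)^{-m}H$. Since $H=\prod_{i=1}^{m}(2mH_i)=(2m)^{m}\hat H$, the hypothesis $H\ge H_0$ of Theorem~\ref{BAKERTYPE} is exactly $\hat H\ge(2m)^{-m}H_0=\hat H_0$. Put $\ell=\log\hat H$, $\mu=m\log(2m)$, so $\log H=\ell+\mu$ and $\log\log H\ge\log\log\hat H=\log\ell$, and let
\[
\epsilon'(H):=\frac{2A\sqrt\rho}{\sqrt{\log\log H}}+\frac{2B\rho}{\log\log H}+\frac{D\sqrt{\log(2\rho\log H/\log\log H)}}{\log H}
\]
be the error exponent of $H$ in (\ref{USUAL}). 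Requiring that the right-hand side of (\ref{USUAL}) be $\ge\hat H^{-1-\hat\epsilon(\hat H)}$ amounts, after taking logarithms and using $\log H=\ell+\mu$, to
\[
\hat\epsilon(\hat H)\ \ge\ \epsilon'(H)+\frac{\log2+E+C\log(2\rho)+C\log(\log H/\log\log H)+(1+\epsilon'(H))\mu}{\ell}.
\]
I would bound $\epsilon'(H)$ by the leading part of $\hat A/\sqrt{\log\log\hat H}$ and show everything else is negligible.

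From $\log g_4\le\log g_2$ in (\ref{gggggg}) one gets $b_0\le\tfrac32\sqrt{\log g_2}$ and $e_0\le\tfrac72\sqrt{\log g_2}$, hence $A=b_0+e_0m\le(\tfrac32+\tfrac72m)\sqrt{\log g_2}$; with $\log\log H\ge\log\log\hat H$ and $\sqrt\rho<1.012$ this gives the leading summand,
\[
\frac{2A\sqrt\rho}{\sqrt{\log\log H}}\ \le\ \frac{(3.036+7.084m)\sqrt{\log g_2}}{\sqrt{\log\log\hat H}}.
\]
Two facts drive the rest. First, by (\ref{valinta}) and $e_0\ge3\sqrt{\log g_2}$,
\[
\log\log\hat H\ \ge\ \log\log\hat H_0\ \ge\ (3me_0)^2\ \ge\ 81m^2\log g_2\ \ge\ 81m^2\log(1+g_3),
\]
the last step since $1+g_3\le g_2$. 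Second, $\ell\ge\log\hat H_0$ is doubly exponential in $m\sqrt{\log g_2}$, while $E$, $C$, $D$, $\mu$, $\log(\log H/\log\log H)$ and $\log g_1$ are at most polynomial in $m$ and $\log g_2$ (recall $\log g_1\le m\log g_2$); hence the last fraction above, and the $D$-part of $\epsilon'(H)$, are negligible compared with $1/\sqrt{\log\log\hat H}$ (using also that $\sqrt{\log\log\hat H}/\ell$ decreases for $\hat H\ge\hat H_0$).

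It remains to treat $\frac{2B\rho}{\log\log H}\le\frac{2\rho(1+b_0+b_1+e_1m)}{\log\log\hat H}$, which I write as $\frac{1}{\sqrt{\log\log\hat H}}\cdot\frac{2\rho(1+b_0+b_1+e_1m)}{\sqrt{\log\log\hat H}}$ and estimate using $\sqrt{\log\log\hat H}\ge9m\sqrt{\log g_2}$. The $1$ and $b_0$ parts, plus the negligible pieces above, add up to $<1$, producing the summand $1$ of $\hat A$. For $b_1,e_1$ I split: if $g_1\le g_2g_4$ then $b_1=0$ and $e_1\le2\log(1+g_3)+2\log2+1$, whose constant part feeds the $0.633\sqrt m$ summand (using $\log\log\hat H\ge60m$) and whose part $2m\log(1+g_3)$ yields $\frac{4\rho m\log(1+g_3)}{\sqrt{\log\log\hat H}}\le0.580\sqrt m\,\sqrt{\log(1+g_3)}$, valid since $\log\log\hat H\ge81m^2\log g_2\ge50m\log(1+g_3)$. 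If $g_1>g_2g_4$ I use the sharpened bound
\[
e_1=\log g_1+\bigl(2\log(1+g_3)-\log g_2\bigr)-\log g_4+2\log2+1\ \le\ \log\bigl(g_1(1+g_3)\bigr)+2\log2+1
\]
(from $\log(1+g_3)\le\log g_2$ and $\log g_4\ge0$) together with $b_1\le\log g_1\le\log(g_1(1+g_3))$; writing $L=\log(g_1(1+g_3))$, the $L$-parts of $b_1$ and $e_1m$ give $\frac{2\rho L}{\sqrt{\log\log\hat H}}\le0.290\sqrt L$ and $\frac{2\rho Lm}{\sqrt{\log\log\hat H}}\le0.410\sqrt m\,\sqrt L$, valid since $\log\log\hat H\ge81m^2\log g_2$ while $L\le(m+1)\log g_2$. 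The $E$-part $\frac{b_1m+e_1m^2}{\ell}$ is again negligible. Collecting the pieces gives the two stated bounds on $\hat A$.

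For the last assertion, (\ref{valinta}) gives $e^{(\gamma\log\gamma)/2}=\exp\!\bigl(\tfrac12(3me_0)^2e^{(3me_0)^2}\bigr)$, so the hypothesis $2\log(2\log\tfrac{s_{\mathbb I}}{t_{\mathbb I}})\le(3me_0)^2e^{(3me_0)^2}$ forces $H_0=e^{(\gamma\log\gamma)/2}$, whence $\log\hat H_0=\log H_0-m\log(2m)\le\tfrac12(3me_0)^2e^{(3me_0)^2}$. The only real difficulty is the bookkeeping: tracking how each of the roughly ten secondary contributions lands among the constants $1,\ 3.036,\ 7.084,\ 0.633,\ 0.580,\ 0.290,\ 0.410$, and verifying that the huge bound $\log\log\hat H\ge81m^2\log g_2$ suffices for each. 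The tightest case is the $L$-dependent part when $g_1>g_2g_4$, where the crude $e_1\le2\log(g_1(1+g_3))$ is not enough and the sharpened estimate above is needed.
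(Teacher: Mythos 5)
Your proposal is correct and follows essentially the same route as the paper: absorb the prefactor $2e^{E}$ and the factor $(2m)^{m}$ into the exponent over $\log\hat H$, keep the $A$- and $B$-terms of the $2\rho\log H/\log\log H$ bound, declare the $C$-, $D$- and prefactor contributions negligible, and split into the cases $g_1\le g_2g_4$ and $g_1>g_2g_4$ with the same sharpened bound $e_1\le\log(g_1(1+g_3))+2\log 2+1$, arriving at the same constants. The only difference is bookkeeping: the paper extracts the constants via $25me_1\le(3me_0)^2$ (so $e_1m/\sqrt{\log\log H}\le\sqrt{e_1m}/5$), whereas you compare each term directly against $\log\log\hat H\ge 81m^2\log g_2$, which yields the identical numerical bounds.
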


Corollary \ref{Corollary2.3} implies the next result where the lower bound is 
formally analogous to the bounds (\ref{Bakerresult}) and (\ref{Mahlerresult})
given by Baker and Mahler, respectively.

\begin{Corollary}\label{Corollary2.4}
Let $\gamma_0, \gamma_1,...,\gamma_m\in\mathbb{Q}$ be distinct rational numbers.
Then 
\begin{equation}\label{BMANALOG}
\left|\beta_0 e^{\gamma_0}+\beta_1 e^{\gamma_1}+...+\beta_m e^{\gamma_m}\right|
> \frac{M^{1-\hat \delta(M)}}{h_0h_1\cdots h_m},
\end{equation}
with $\hat \delta(M) \le \hat B(\overline{\gamma})/\sqrt{\log\log M}$ and
\begin{equation}\label{BMANALOGERROR}
\hat B(\overline{\gamma})
\le c_m m^2\sqrt{\log(g_1(\overline{\gamma})(1+g_3(\overline{\gamma})))},
\end{equation}
where $c_2=13$ and $c_m=12$, if $m\ge 3$, for all  $\overline{\beta}=(\beta_0,...,\beta_m)^T \in \mathbb{Z}^{m+1}\setminus\{\overline{0}\}$ and $M \ge M_{0,AKT}(\overline{\gamma})$ with
\begin{equation}\label{AKTHATM} 
\log M_{0,AKT}(\overline{\gamma})= 
96m^2(\log(g_1(\overline{\gamma})(1+g_3(\overline{\gamma})))) 
e^{192m^2\log(g_1(\overline{\gamma})(1+g_3(\overline{\gamma})))}.
\end{equation}
\end{Corollary}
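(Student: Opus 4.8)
The plan is to derive Corollary \ref{Corollary2.4} from Corollary \ref{Corollary2.3} by a change of variables that reduces the general exponential linear form at distinct rationals $\gamma_j$ to the normalized shape treated there. Write $\gamma_j = \alpha_j + c$ for a suitable common shift $c\in\mathbb{Q}$; since $e^{\gamma_j} = e^c e^{\alpha_j}$, the factor $e^c$ pulls out of the whole linear form and is harmless. Choosing $c = \gamma_0$ (or, more symmetrically, a value making $\alpha_0 = 0$) puts us exactly in the setting of Corollary \ref{Corollary2.3} with $\overline{\alpha} = (0,\gamma_1-\gamma_0,\dots,\gamma_m-\gamma_0)^T$. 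Then set $\hat H := h_0 h_1\cdots h_m$ on the one hand and relate it to $M = \max_j |\beta_j|$ on the other: clearly $\hat H \le M^{m+1}$, and conversely $M \le \hat H$ whenever at least one $\beta_j$ exceeds $1$, which gives $1+\hat\epsilon(\hat H)$ bounds transferring to $1-\hat\delta(M)$ bounds after writing $\hat H^{1+\hat\epsilon} = \hat H \cdot \hat H^{\hat\epsilon} \le M^{m+1}\cdot M^{(m+1)\hat\epsilon(M)}$ and dividing through. The monotonicity and the crude bound $\log\log \hat H \ge \log\log M - \log(m+1)$ (valid for $M$ large) keep the $1/\sqrt{\log\log}$ shape intact, at the cost of an extra bounded factor absorbed into the constant $c_m$.

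The second ingredient is to bound the geometric quantities $g_1(\overline{\alpha}), g_2(\overline{\alpha}), g_3(\overline{\alpha}), g_4(\overline{\alpha})$ for the shifted tuple in terms of the original data $g_1(\overline{\gamma}), g_3(\overline{\gamma})$. Since $\alpha_j - \alpha_0 = \gamma_j - \gamma_0$ has denominator dividing $\lcm$ of the denominators of $\gamma_j$ and $\gamma_0$, we get $g_1(\overline{\alpha}) \le g_1(\overline{\gamma})^2$ or, with a careful choice of representatives, $g_1(\overline{\alpha}) \le g_1(\overline{\gamma})$ up to a small factor; similarly $g_3(\overline{\alpha}) \le 2 g_3(\overline{\gamma})$ and, using \eqref{gggggg}, $g_2(\overline{\alpha}) \le 2 g_1(\overline{\alpha})\max\{1, g_3(\overline{\alpha})\} \le C g_1(\overline{\gamma})(1+g_3(\overline{\gamma}))$ for an absolute constant. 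Feeding these into the two cases of \eqref{ahatA}--\eqref{bhatA}, the dominant contribution is the term $(3.036+7.084m)\sqrt{\log g_2(\overline{\alpha})}$, which is $O(m\sqrt{\log(g_1(\overline{\gamma})(1+g_3(\overline{\gamma})))})$; multiplying by the $(m+1)$ that came from the $\hat H$-versus-$M$ conversion produces the $m^2$ in \eqref{BMANALOGERROR}. The numerical constants $c_2 = 13$, $c_m = 12$ are then obtained by tracking the worst-case coefficients: for $m\ge 3$ the ratio $(m+1)/m$ is small enough that $7.084\cdot(m+1)/m \le 12$ after accounting for the lower-order $\sqrt{m}$ and $\sqrt{m}\sqrt{\log(1+g_3)}$ terms, while $m=2$ needs the slightly larger constant $13$.

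Finally, the threshold \eqref{AKTHATM} comes from pushing the threshold $\hat H_0(\overline{\alpha})$ of Corollary \ref{Corollary2.3} through the same change of variables and the $\hat H \le M^{m+1}$ inequality. Using the last sentence of Corollary \ref{Corollary2.3}, $\log \hat H_0(\overline{\alpha}) \le \tfrac12 (3m e_0)^2 e^{(3m e_0)^2}$, and bounding $e_0 = 3\sqrt{\log g_2(\overline{\alpha})} + \tfrac{\log g_4(\overline{\alpha})}{2\sqrt{\log g_2(\overline{\alpha})}}$ so that $(3m e_0)^2 \le$ an expression of the form $C' m^2 \log(g_1(\overline{\gamma})(1+g_3(\overline{\gamma})))$, one gets $\log M_{0,AKT}$ of the stated double-exponential form $96 m^2 L\, e^{192 m^2 L}$ with $L = \log(g_1(\overline{\gamma})(1+g_3(\overline{\gamma})))$, after requiring $M \ge \hat H_0(\overline{\alpha})^{1/(m+1)}$ and simplifying the exponents (the factor $2$ between $96$ and $192$ records the $e_0^2$ versus $b_0$-type split). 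I expect the main obstacle to be the bookkeeping of constants: verifying that all the lower-order terms in \eqref{ahatA}--\eqref{bhatA}, together with the $(m+1)$ inflation and the $\log(m+1)$ loss in $\log\log$, really do fit under the clean $c_m m^2 \sqrt{L}$ bound with $c_2 = 13$, $c_m = 12$, and that the threshold constants $96$ and $192$ are genuinely large enough to absorb every multiplicative slack introduced along the way. This is routine but delicate, and is where a direct invocation of "Corollary \ref{Corollary2.3} plus elementary estimates" has to be replaced by an explicit, case-by-case numerical check.
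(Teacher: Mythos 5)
Your high-level plan --- shift by $\gamma_0$, invoke Corollary \ref{Corollary2.3}, then convert the $\hat H$-bound into an $M$-bound --- is indeed the paper's plan, but your choice of $\hat H$ breaks the conversion. Corollary \ref{Corollary2.3} bounds the linear form below by $\hat H^{-1-\hat\epsilon(\hat H)}$ where $\hat H=\prod_{i=1}^m H_i$ is a product of exactly $m$ factors, none attached to the constant term. The factor $M^{1}$ in the numerator of \eqref{BMANALOG} materialises precisely because $h_1\cdots h_m$ has one fewer factor than $h_0h_1\cdots h_m$; taking wlog $h_0=M$ one writes
\[
\frac{1}{(h_1\cdots h_m)^{1+\hat\epsilon}}=\frac{h_0}{h_0h_1\cdots h_m}\,(h_1\cdots h_m)^{-\hat\epsilon}\ge\frac{M^{\,1-m\hat\epsilon}}{h_0\cdots h_m},
\]
which is exactly the paper's step, followed by absorbing $m\hat\epsilon(\hat h)$ and the harmless $\log|e^{\gamma_0}|/\log M$ term into $\hat\delta(M)$. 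You instead set $\hat H:=h_0h_1\cdots h_m$ and use $\hat H\le M^{m+1}$; this throws away the spare power of $M$ and, after ``dividing through,'' can only yield a bound of the shape $M^{-\hat\delta}/\prod_{i=0}^m h_i$, one power of $M$ short of \eqref{BMANALOG}. The same slip inflates the exponent loss to $(m+1)\hat\epsilon$ rather than the paper's $m\hat\epsilon$, and so overshoots the constant; the correct relation is $\hat B(\overline\gamma)=1+m\,\hat A(\overline\eta)$ with $\overline\eta=\overline\gamma-\gamma_0\overline 1$.

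Two further issues. For the shifted tuple the clean bounds are $g_1(\overline\eta)\le g_1(\overline\gamma)$ (each $\gamma_j-\gamma_0$ already has denominator dividing the common $\lcm$, so the square you mention is an unnecessary loss), $g_3(\overline\eta)\le 2g_3(\overline\gamma)$ and $g_4(\overline\eta)\le 1+g_1(\overline\gamma)$; these feed directly into \eqref{ahatA}--\eqref{bhatA} after a mean-value step of the type $\sqrt{\log(g_1(1+2g_3))}\le\sqrt{\log(g_1(1+g_3))}+0.331$, and that inequality requires $g_1(\overline\gamma)(1+g_3(\overline\gamma))\ge 3$. This is automatic when $m\ge 3$ (there are at least four distinct rationals) but can fail when $m=2$, which is why $c_2=13$ cannot be obtained from the same generic estimate that gives $c_m=12$: the paper treats $m=2$ by a separate three-way split according to whether $g_1(1+g_3)$ equals $2$, $3$, or is $\ge 4$. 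Your proposal folds $m=2$ into ``routine but delicate bookkeeping,'' but it is in fact a genuine case distinction that your argument does not anticipate.
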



\subsection{Comparisons to results of Mahler and Sankilampi}\label{Section2.2}

Here we will compare our(=AKT) results to the results given by Mahler(=MA) and Sankilampi(=SA).
Since the explicit results given by Mahler and Sankilampi are over $\mathbb{Q}$, in this chapter
we shall stay in the case $\mathbb{I}=\mathbb{Q}$. Write 
$g_1=g_1(\overline{\alpha})$ and $\tilde g_3=\tilde g_3(\overline{\alpha})=\max(1,g_3)$. 
Sankilampi \cite{SANKILAMPI} has a similar result as our Corollary \ref{Corollary2.3}
with a corresponding term
\begin{equation}\label{Sepsilon2}
\hat A_{SA}(\overline{\alpha})
= 16m^2+m\log(2m)+39m+12+\left(8+\frac{4}{m}+\frac{1}{3m^2}\right)
\log(2g_1\tilde g_3),
\end{equation}
(\cite{SANKILAMPI}, formula (71), page 32) valid for all $\hat H\ge\hat H_{0,SA}(\overline{\alpha})$, where
\begin{equation}\label{SHATH} 
\log\hat H_{0,SA}(\overline{\alpha})=
e^{(16m^2+36m+m\log(2m)+8\log(2g_1\tilde{g_3}))^2},
\end{equation}
(\cite{SANKILAMPI}, line 13, page 33).

\begin{Theorem}\label{vertailu} We have
\begin{equation}\label{AKTepsilon3}
\hat A_{AKT}(\overline{\alpha})\le \sqrt{m}+\left(4+\sqrt{m}+8m\right)\sqrt{\log(2g_1\tilde g_3)}
\end{equation}
for all $\hat H\ge\hat H_{0,AKT}(\overline{\alpha})$ with
\begin{equation}\label{AKTHATA} 
\log\hat H_{0,AKT}(\overline{\alpha})= 56m^2\log(2g_1\tilde g_3) e^{111m^2\log(2g_1\tilde g_3)}.
\end{equation}
Furthermore, we have $\hat A_{AKT}(\overline{\alpha})\leq \hat A_{SA}(\overline{\alpha})$.
\end{Theorem}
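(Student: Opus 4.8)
\emph{Proof plan.} The whole statement should follow from Corollary \ref{Corollary2.3} by collapsing the three logarithmic quantities $\log g_2$, $\log(1+g_3)$, $\log(g_1(1+g_3))$ appearing there into the single quantity $L:=\log(2g_1\tilde g_3)$. First I would record, straight from (\ref{gggggg}), that $g_2\le g_1(1+g_3)\le 2g_1\tilde g_3$ and $1+g_3\le 2\tilde g_3\le 2g_1\tilde g_3$, so that $\log g_2$, $\log(1+g_3)$ and $\log(g_1(1+g_3))$ are all $\le L$; and that $g_1\ge 1$, $\tilde g_3\ge 1$ force $2g_1\tilde g_3\ge 2$, hence $L\ge\log 2$ and $1<1.21\sqrt L$.

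Next I would substitute these bounds, together with the trivial $0.633\sqrt m\le\sqrt m$, into the two estimates for $\hat A(\overline{\alpha})$ in Corollary \ref{Corollary2.3}. In the case $g_1\le g_2 g_4$ this gives $\hat A_{AKT}(\overline{\alpha})\le\sqrt m+(4.246+0.580\sqrt m+7.084m)\sqrt L$, and in the case $g_1> g_2 g_4$ it gives $\hat A_{AKT}(\overline{\alpha})\le\sqrt m+(4.536+0.410\sqrt m+7.084m)\sqrt L$. In both cases the coefficient of $\sqrt L$ is $\le 4+\sqrt m+8m$ for $m\ge 2$, since this amounts to the obvious inequalities $0.246\le 0.420\sqrt m+0.916m$ and $0.536\le 0.590\sqrt m+0.916m$; this is exactly (\ref{AKTepsilon3}).

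For the admissible range (\ref{AKTHATA}) I would estimate the quantity $\gamma$ of (\ref{valinta}). Since $g_4\le g_2$ and $g_2\ge 2$ by (\ref{gggggg}), one has $\frac{\log g_4}{2\sqrt{\log g_2}}\le\frac12\sqrt{\log g_2}$, so $e_0\le 3.5\sqrt{\log g_2}$ and $(3me_0)^2\le 110.25\,m^2\log g_2\le 111\,m^2L$. Combined with the last sentence of Corollary \ref{Corollary2.3} (whose hypothesis on $s_{\mathbb{I}}/t_{\mathbb{I}}$ is checked directly in the present case $\mathbb{I}=\mathbb{Q}$, where $s_{\mathbb{Q}},t_{\mathbb{Q}}$ are absolute constants and the right-hand side is astronomically large), this yields $\log\hat H_0(\overline{\alpha})\le\frac12(3me_0)^2e^{(3me_0)^2}\le 56\,m^2L\,e^{111\,m^2L}=\log\hat H_{0,AKT}(\overline{\alpha})$, so (\ref{AKTepsilon3}) indeed holds for every $\hat H\ge\hat H_{0,AKT}(\overline{\alpha})$.

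Finally, $\hat A_{AKT}(\overline{\alpha})\le\hat A_{SA}(\overline{\alpha})$ reduces, via (\ref{AKTepsilon3}), (\ref{Sepsilon2}) and $L\ge\log 2$, to showing $b+a\sqrt L\le d+cL$ for all $L\ge\log 2$, where $a=4+\sqrt m+8m$, $b=\sqrt m$, $c=8+\frac4m+\frac1{3m^2}$, $d=16m^2+m\log(2m)+39m+12$. Here $d+cL-b-a\sqrt L$ is convex in $L$ with minimum at $L^*=(a/(2c))^2$, and since $a/(2c)\ge\sqrt{\log 2}$ for $m\ge 2$ it suffices to check the value at $L^*$, i.e.\ $\frac{a^2}{4c}\le d-b$; using $c\ge 8$ this follows from $\frac{(8m+\sqrt m+4)^2}{32}\le 16m^2+m\log(2m)+39m+12-\sqrt m$, whose left side is $2m^2+O(m^{3/2})$ against a right side $\ge 16m^2$. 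I expect the only real work to be this bookkeeping of numerical constants — verifying the small inequalities simultaneously for every $m\ge 2$, with $m=2$ (and to a lesser extent $m=3$) the tightest, though no case comes anywhere near failing.
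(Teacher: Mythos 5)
Your proposal is correct and follows essentially the same route as the paper: both the bound \eqref{AKTepsilon3} and the threshold \eqref{AKTHATA} are obtained by specializing Corollary \ref{Corollary2.3} via \eqref{gggggg} to the single quantity $\log(2g_1\tilde g_3)$, with $e_0^2\le 12.25\log(2g_1\tilde g_3)$ and $\log\hat H_0\le\tfrac12(3me_0)^2e^{(3me_0)^2}$ yielding the same constants $55.125$ and $110.25$ that the paper rounds to $56$ and $111$. The only cosmetic difference is the final comparison with $\hat A_{SA}$, where the paper uses the cruder bound $\hat A_{AKT}<m+13m\sqrt{\log(2g_1\tilde g_3)}$ together with the arithmetic--geometric inequality $m\sqrt{L}\le m^2+\tfrac14 L$ instead of your minimization of the quadratic in $\sqrt{L}$; the two arguments are equivalent.
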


Following the notations in Corollary \ref{Corollary2.4} Mahler has a corresponding term
\begin{equation}\label{Mepsilon2}
\hat B_{MA}(\overline{\gamma})= 
12(m+1)^3\sqrt{\log(g_1(\overline{\gamma})(1+g_3(\overline{\gamma})))}
\end{equation}
valid for all $M\ge M_{0,MA}(\overline{\gamma})$ with
\begin{equation}\label{MHATM} 
\log M_{0,MA}(\overline{\gamma}) = 16(m+1)^4(\log(g_1(\overline{\gamma})+g_3(\overline{\gamma})))
 e^{16(m+1)^4\log (g_1(\overline{\gamma})(1+g_3(\overline{\gamma})))}.
\end{equation}
Our term 
\begin{equation*}
\hat B_{AKT}(\overline{\gamma})= 
c_m m^2\sqrt{\log(g_1(\overline{\gamma})(1+g_3(\overline{\gamma})))}
\end{equation*}
in Corollary \ref{Corollary2.4} is valid for all $M\ge M_{0,AKT}(\overline{\gamma})$ with
\begin{equation}\label{AKTHATM2} 
\log M_{0,AKT}(\overline{\gamma})= 
96m^2(\log(g_1(\overline{\gamma})(1+g_3(\overline{\gamma})))) 
e^{192m^2\log(g_1(\overline{\gamma})(1+g_3(\overline{\gamma})))}
\end{equation}
is clearly better in the dependence on $m$ than the term given by Mahler and 
in the lower bound the dependence on $m$ is improved from Mahler's quartic to our quadratic.  
 

\subsection{Examples}\label{Section2.3}

\begin{Example}\label{Example2.5} (The details of this example can be found in the Proofs section \ref{tokattodistukset}.) Choose $\alpha_j=j,\quad j=0,1,...,m.$ Then 
\begin{equation*}
\hat A_{AKT}(\overline{\alpha})\le 
1+0.670m+(2.252+6.072m)\sqrt{\log(m+1)}
\end{equation*}
and
\begin{equation*}
\log\hat H_{0,AKT}(\overline{\alpha})= 
(40.5m^2\log(m+1)+9.850)e^{81m^2\log(m+1)+19.699m^2}.
\end{equation*}
In particular, if $m=2$, then
\begin{equation*}
\hat A_{AKT}(\overline{\alpha})\le 18,\quad
\log\hat H_{0,AKT}(\overline{\alpha})=e^{441}
\end{equation*}
while the corresponding terms of Sankilampi are
\begin{equation*}
\hat A_{SA}(\overline{\alpha})\le 175,\quad
\log\hat H_{0,SA}(\overline{\alpha})=e^{23442}.
\end{equation*}
If we choose $\gamma_j=j,\quad j=0,1,...,m$, then our Baker-Mahler type terms are
\begin{equation*}
\hat B_{AKT}(\overline{\gamma})\le 
1+m+0.670m^2+(2.252m+6.072m^2)\sqrt{\log(m+1)}
\end{equation*}
and
\begin{equation*} 
\log M_{0,AKT}(\overline{\gamma})=(40.5m^2\log(m+1)+9.850)e^{81m^2\log(m+1)+19.699m^2}.
\end{equation*}
In particular, if $m=2$, then
\begin{equation*}
\hat B_{AKT}(\overline{\gamma}) \le 36,\quad
\log M_{0,AKT}(\overline{\gamma})= e^{441},
\end{equation*}
contra Mahler 
\begin{equation*}
\hat B_{MA}(\overline{\gamma}) \le 340,\quad
\log M_{0,MA}(\overline{\gamma}) = e^{1432}.
\end{equation*}
\end{Example}

\begin{Example}\label{Example2.6}
Choose
$\alpha_0=0,\quad \alpha_j=\frac{1}{j},\quad j=1,...,m.$ Using the bound $g_1=\lcm(1,2,...,m)\le e^{1.030883m}$ (see \cite{ROSSER}) it is straightforward to conclude that
\begin{equation*}
\hat A_{AKT}(\overline{\alpha})
\le 1+0.036m+(3.036+7.084m)\sqrt{\log(m+1)}
\end{equation*}
and
\begin{equation*}
\log\hat H_{0,AKT}(\overline{\alpha})
\le 55.125m^2(\log(m+1))e^{110.25m^2(\log(m+1))}. 
\end{equation*}
The corresponding terms of Sankilampi are
\[
\hat A_{SA}(\overline{\alpha})\le 16m^2+m\log m+47.941m+23.285,
\]
\[
\log\hat H_{0,SA}(\overline{\alpha})=e^{(16m^2+m\log m+44.998m+5.546)^2}.
\]
If we choose $\gamma_0=0,\quad \gamma_j=\frac{1}{j},\quad j=1,...,m$,
then our Baker-Mahler type terms are
\begin{equation*}
\hat B_{AKT}(\overline{\gamma}) \le 7.1m^2\sqrt{\log(m+1)}+...,\quad
\log M_{0,AKT}(\overline{\gamma})= 56m^2(\log(m+1))e^{111m^2\log(m+1)}
\end{equation*}
while Mahler has
\begin{equation*}
\hat B_{MA}(\overline{\gamma})=12.1m(m+1)^3,\quad
\log M_{0,MA}(\overline{\gamma}) = 16.1m(m+1)^4 e^{16.1m(m+1)^4}.
\end{equation*}
\end{Example}

In the last example we work on the field
$\mathbb{I}=\mathbb{Q}(\sqrt{-1})$ with the ring of integers
$\mathbb{Z}_{\mathbb{I}}=\mathbb{Z}[\sqrt{-1}]$, the ring of Gaussian integers.

\begin{Example}\label{Example2.7}
Let $r\ge \sqrt{2}$ be given and let
$\alpha_j \in \mathbb{Z}_{\mathbb{I}}$, $j=0,1,\ldots,m$, be $m+1$
integral points with absolute value at most $r$.
Then
\[
\hat A_{AKT}(\overline{\alpha}) \le 1+(1,596+4,294m)\sqrt{\log(m+1)}
\]
and
\begin{equation*}
\log\hat H_{0,AKT}(\overline{\alpha})= 
m^2(20,25\log(m+1)+9,604)e^{m^2(40.5\log(m+1)+19,207)}.
\end{equation*}
\end{Example}

\section{Proofs}

\subsection{Siegel's lemma}

Siegel's, or Thue-Siegel's, lemma is the following well-known result, used to bound the solutions of a Diophantine system of equations.

\begin{Lemma}[Siegel's lemma]\label{THUE}
Let $\mathbb{I}$ denote the field $\mathbb{Q}$ of rational numbers or
an imaginary quadratic field $\mathbb{Q(\sqrt{-D})}$, where $D\in \mathbb{Z}^+$  and 
$\ZI$ it's ring of intergers.\\
Let 
\begin{equation}\label{LINFORMS}
L_m(\overline{z})=\sum_{n=1}^{N} a_{mn}z_n,\quad m=1,...,M,
\end{equation}
be $M$ non-trivial linear forms with coefficients $a_{mn}\in \ZI$
in $N$ variables $z_n$.
Define $A_m:=\sum_{n=1}^{N}|a_{mn}|\in \mathbb{Z}^+$ for $m=1,...,M$. Suppose that $M<N$. Then there exists positive constants $s_{\mathbb{I}}, t_{\mathbb{I}}$ such that the system of equations  
\[
L_m(\overline{z})=0,\quad m=1,...,M,
\]
has a solution $\overline{z}=(z_1,...,z_N)^T\in \ZI^{N}\setminus\{\overline{0}\}$ with
\[
1\le \underset{1\le n\le N}\max|z_n|\le 
s_{\mathbb{I}} t_{\mathbb{I}}^{\frac{M}{N-M}}(A_1\cdots A_M)^{\frac{1}{N-M}},
\]
where $s_{\mathbb{Q}}=t_{\mathbb{Q}}=1$ (see e.g. \cite{MAHLER}) and $s_{\mathbb{I}}, t_{\mathbb{I}}$ suitable constants depending
on the field.
\end{Lemma}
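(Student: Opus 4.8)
The plan is to prove Lemma~\ref{THUE} by Dirichlet's box principle, treating $\mathbb{I}=\mathbb{Q}$ first and then reducing the imaginary quadratic case to a two–dimensional lattice–point count, the field constants $s_{\mathbb{I}},t_{\mathbb{I}}$ being the price of discretizing a region of $\C$ by cosets of the ring of integers. For $\mathbb{I}=\mathbb{Q}$ (so $\ZI=\Z$) I would fix $Z\in\Z^+$, let $\mathcal B$ be the set of $\overline z=(z_1,\dots,z_N)^T$ with every $z_n\in\{0,1,\dots,Z\}$, so $|\mathcal B|=(Z+1)^N$, and note that on $\mathcal B$ each form $L_m(\overline z)=\sum_{n=1}^{N}a_{mn}z_n$ runs over an interval of length $(\sum_{n}|a_{mn}|)Z=A_mZ$, hence over at most $A_mZ+1$ integers; thus $\overline z\mapsto(L_1(\overline z),\dots,L_M(\overline z))$ has image of size at most $\prod_{m=1}^{M}(A_mZ+1)$. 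Whenever $(Z+1)^N>\prod_{m=1}^{M}(A_mZ+1)$, two distinct elements of $\mathcal B$ collide, and their difference $\overline z\in\Z^N\setminus\{\overline 0\}$ solves the system with $\max_n|z_n|\le Z$; a short computation (using $A_m\ge 1$ and $M<N$) shows $Z=\lfloor(A_1\cdots A_M)^{1/(N-M)}\rfloor$ works, which is Mahler's argument \cite{MAHLER} and gives $s_{\mathbb{Q}}=t_{\mathbb{Q}}=1$.

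For $\mathbb{I}=\mathbb{Q}(\sqrt{-D})$ I would identify $\ZI$ with a rank–two lattice $\Lambda\subset\C$ having $\Z$–basis $\{1,\omega\}$, with $\omega=\sqrt{-D}$ or $\omega=\tfrac{1+\sqrt{-D}}{2}$ according to $D\bmod 4$, so that $\operatorname{covol}(\Lambda)\asymp\sqrt D$ and a fundamental parallelogram has diameter $\asymp\sqrt D$. Taking $\mathcal B$ to consist of the $\overline z$ with $z_n=u_n+v_n\omega$, $u_n,v_n\in\{0,\dots,Z\}$, gives $|\mathcal B|=(Z+1)^{2N}$, while each value $L_m(\overline z)$ lies in a bounded convex region $R_m\subset\C$ of diameter $O(A_mZ)$. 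Covering $\C$ by translates of a fundamental domain, $R_m$ meets at most $\operatorname{Vol}(R_m)/\operatorname{covol}(\Lambda)+O\!\left(\operatorname{diam}(R_m)/\operatorname{covol}(\Lambda)^{1/2}\right)\le\kappa_{\mathbb{I}}(A_mZ+1)^2$ cosets of $\Lambda$, with $\kappa_{\mathbb{I}}$ explicit and depending only on $D$. Grouping $\overline z\in\mathcal B$ by the coset vector of $(L_1(\overline z),\dots,L_M(\overline z))$, the pigeonhole inequality becomes $(Z+1)^{2N}>\prod_{m=1}^{M}\kappa_{\mathbb{I}}(A_mZ+1)^2$; choosing $Z$ proportional to $(A_1\cdots A_M)^{1/(N-M)}$ satisfies it, and collecting the $\kappa_{\mathbb{I}}$–factors into one multiplicative constant $s_{\mathbb{I}}$ and a per–form factor $t_{\mathbb{I}}$ yields the stated bound. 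Optimizing how $\kappa_{\mathbb{I}}$ grows with $\operatorname{covol}(\Lambda)\asymp\sqrt D$ — by using the sharpest estimate for the number of lattice cosets met by a disc rather than a crude enlargement by a fundamental domain — is where one improves the $D$–dependence over Bombieri \cite{BOMBIERI}.

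The rational case and the extraction of the exponent $\tfrac{M}{N-M}$ are routine; the main obstacle is the explicit, uniform count of cosets of $\Lambda$ met by a disc of prescribed radius, with an error term whose dependence on $\sqrt D$ is as small as possible, and then threading that count through the pigeonhole inequality so that the main term keeps its exponent while all field–dependent losses are confined to $s_{\mathbb{I}}$ and $t_{\mathbb{I}}$.
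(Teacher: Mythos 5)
Your plan is essentially the paper's own argument: the paper states Lemma \ref{THUE} as known and proves its refined version, Lemma \ref{Thuemax}, by exactly this pigeonhole scheme --- identify $\ZI$ with a rank-two lattice in $\C$, count the lattice points available for the $z_n$, count the possible value tuples of the forms, and take the difference of two colliding points. The one substantive difference is the choice of search region: you take a coordinate box $z_n=u_n+v_n\omega$ with $u_n,v_n\in\{0,\dots,Z\}$, whereas the paper bounds $|z_n|\le B$ directly and counts lattice points in the ellipse $x^2+Dy^2\le B^2$ (area $\pi B^2 D^{-1/2}$, with a boundary correction handled by allocating unit squares). That choice matters for the quantitative claim you make in passing: with your box, the resulting solution satisfies $\max_n|z_n|\le Z(1+|\omega|)\asymp Z\sqrt{D}$ and the value-count carries an extra $\sqrt{D}$ per form, so the $D$-dependence lands in both $s_{\mathbb{I}}$ and $t_{\mathbb{I}}$; this is harmless for Lemma \ref{THUE} as stated (the constants may depend on the field), but it does not give the improvement over Bombieri that the paper obtains, namely $t_{\mathbb{I}}$ independent of $D$ and $s_{\mathbb{I}}\asymp D^{1/4}$, which comes precisely from measuring the $z_n$ by absolute value so that the $D^{-1/2}$ densities on the domain and image sides cancel except through the exponent $1/(N-M)$. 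Also note your phrase ``cosets of $\Lambda$ met by $R_m$'' should simply read ``lattice points of $\Lambda$ in $R_m$'', since the values $L_m(\overline z)$ already lie in $\ZI$; and the paper additionally caps the bound below by a term of size $2c\sqrt{D}$ (the $\max$ in Lemma \ref{Thuemax}) to absorb the boundary error for small $A_i$, a detail your sketch would need as well if you pursued the sharper constants.
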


We prove the following version of it:

\begin{Lemma}\label{Thuemax}
 There is a solution
\[
1\le \underset{1\le n\le N}\max|z_n|\le 
\max\left\{2c\sqrt{D},s_{\mathbb{I}} t_{\mathbb{I}}^{\frac{M}{N-M}}(A_1\cdots A_M)^{\frac{1}{N-M}}\right\},
\]
with 
\begin{equation}\label{stsiegel}
s_{\mathbb{Q(\sqrt{-D})}}=
\begin{cases}
&\frac{2\sqrt{2}D^{1/4}}{\sqrt{\pi}}; \\
&\frac{2}{\sqrt{\pi}}D^{1/4};\\
\end{cases}
\quad  t_{\mathbb{Q(\sqrt{-D})}}=
\begin{cases}
&\frac{5}{2\sqrt{2}};\\ 
&\frac{5}{2\sqrt{2}};\\ 
\end{cases}
\quad c=
\begin{cases}
&2\sqrt{2},\quad \text{if}\quad D\equiv 1,2\pmod{4};\\ 

&2,\quad \text{if}\quad D\equiv 3\pmod{4}.\\ 
\end{cases}
\end{equation}
\end{Lemma}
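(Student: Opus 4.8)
The plan is to run the classical pigeonhole (box) proof of Siegel's lemma, but with every error term made explicit; the extra term $2c\sqrt D$ in the conclusion will appear exactly as the threshold below which the crude lattice-point count is not yet a valid lower bound. First I would realise $\ZI$ as a rank-$2$ lattice $\Lambda_0\subset\C\cong\R^2$, treating the two cases separately: $\ZI=\Z[\sqrt{-D}]$ when $D\equiv 1,2\pmod 4$, with a $1\times\sqrt D$ rectangular fundamental cell and $\det\Lambda_0=\sqrt D$; and $\ZI=\Z[\tfrac{1+\sqrt{-D}}{2}]$ when $D\equiv 3\pmod 4$, with $\det\Lambda_0=\sqrt D/2$ and a sheared cell. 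In either case let $\varrho$ denote the circum-diameter of a fundamental cell; an elementary computation gives $\varrho\le\tfrac12 c\sqrt D$ with $c$ exactly as in \eqref{stsiegel} ($\varrho=\sqrt{1+D}$ in the first case, $\varrho=\sqrt{9+D}/2$ in the second), which is where the two residue classes of $D$ get separated.

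Next I would record two explicit counting estimates. Translating a copy of the fundamental cell to each $z\in\Lambda_0$ with $|z|\le Z$ gives pairwise non-overlapping cells whose union contains the disc of radius $Z-\varrho$, and (for $|z|\le W$) cells whose union lies inside the disc of radius $W+\varrho$; hence
\[
\frac{\pi(Z-\varrho)^2}{\det\Lambda_0}\ \le\ \#\{z\in\ZI:|z|\le Z\},\qquad\qquad \#\{z\in\ZI:|z|\le W\}\ \le\ \frac{\pi(W+\varrho)^2}{\det\Lambda_0},
\]
the first being useful only once $Z$ exceeds a definite multiple of $\varrho$, i.e.\ of $\sqrt D$. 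Then I apply pigeonhole: on the box $\mathcal B_Z=\{\overline z\in\ZI^N:|z_n|\le Z\ \forall n\}$ the triangle inequality gives $|L_m(\overline z)|\le A_mZ$, so $\overline z\mapsto(L_1(\overline z),\dots,L_M(\overline z))$ maps $\mathcal B_Z$ into $\prod_{m=1}^M\{w\in\ZI:|w|\le A_mZ\}$. If
\[
\left(\frac{\pi(Z-\varrho)^2}{\det\Lambda_0}\right)^{N}\ >\ \prod_{m=1}^M\frac{\pi(A_mZ+\varrho)^2}{\det\Lambda_0}\,,
\]
then two distinct points of $\mathcal B_Z$ share an image and their difference is a nonzero solution of $L_m(\overline z)=0$, $m=1,\dots,M$, with $\max_n|z_n|\le 2Z$ (the factor $2$ is absorbed by working with a half-open box, or, cleanly, by replacing pigeonhole with Minkowski's convex-body theorem applied to the kernel lattice, whose covolume is bounded by the same counting data).

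To finish I would take $Z$ just below $\tfrac12\max\{2c\sqrt D,\ s_{\mathbb I}t_{\mathbb I}^{M/(N-M)}(A_1\cdots A_M)^{1/(N-M)}\}$ with $s_{\mathbb I},t_{\mathbb I}$ as in \eqref{stsiegel}. Since $\varrho\le\tfrac12 c\sqrt D$ one has $Z\ge\varrho$ in all cases, and $Z\ge 2\varrho$ whenever the $2c\sqrt D$ term is active; using this together with $A_m\ge1$ and $N-M\ge1$ the boundary-loss ratios $(Z-\varrho)/Z$ and $(A_mZ+\varrho)/(A_mZ)$ are bounded below and above respectively, the displayed counting inequality is verified, and $2Z$ does not exceed the asserted bound — which unwinds to exactly $s_{\mathbb I}=\tfrac{2\sqrt2}{\sqrt\pi}D^{1/4}$ (resp.\ $\tfrac2{\sqrt\pi}D^{1/4}$) and $t_{\mathbb I}=\tfrac5{2\sqrt2}$.

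The proof has no conceptual difficulty; all the work is in keeping the constants sharp, and that is the main obstacle. Three points need care: (i) pinning down the exact circum-diameter of the fundamental cell in each residue class so as to get the stated $c$ and no larger; (ii) estimating the boundary-loss ratios tightly enough that the resulting $s_{\mathbb I}$ scales only like $D^{1/4}$ — precisely the improvement over Bombieri \cite{BOMBIERI} — and that $t_{\mathbb I}$ comes out as $\tfrac5{2\sqrt2}$; and (iii) organising the box/Minkowski step so that nothing is lost in passing from an approximate coincidence to a genuine zero. Each point is routine in isolation; carrying all three simultaneously at full strength is the delicate part.
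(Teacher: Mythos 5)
Your overall strategy is the same as the paper's (explicit pigeonhole counting of lattice points, case split on $D\bmod 4$, the threshold $2c\sqrt D$ appearing as the regime where the area count is not yet valid, and the factor $2$ from differencing two colliding box points), but the decisive quantitative step does not deliver the constants in \eqref{stsiegel}. You deflate and inflate \emph{discs isotropically by the circum-diameter} $\varrho$ of a fundamental cell, and $\varrho$ is of size $\sqrt D$ ($\sqrt{1+D}$, resp.\ $\sqrt{9+D}/2$). Since your $Z$ is only guaranteed to satisfy $Z\ge c\sqrt D$ (with $c=2\sqrt2$, resp.\ $2$), the best you get from your own displayed inequality is $(Z-\varrho)/Z\ge \tfrac12$ and $(A_mZ+\varrho)/(A_mZ)\le \tfrac32$; solving the resulting inequality gives, for $D\equiv 1,2\pmod 4$, a bound of the form $2Z\le \max\{2c\sqrt D,\; \tfrac{4}{\sqrt\pi}D^{1/4}\,3^{M/(N-M)}(A_1\cdots A_M)^{1/(N-M)}\}$, i.e.\ $s_{\mathbb I}=4D^{1/4}/\sqrt\pi$ and $t_{\mathbb I}=3$, which is strictly weaker than the asserted $s_{\mathbb I}=2\sqrt2\,D^{1/4}/\sqrt\pi$, $t_{\mathbb I}=5/(2\sqrt2)$ (and similarly in the case $D\equiv3\pmod4$). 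Your closing claim that the ratios ``unwind to exactly'' the stated $s_{\mathbb I},t_{\mathbb I}$ is therefore unsupported: with the estimates you set up, it is false, and this is precisely the delicate point you flagged as (ii).

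The paper avoids this loss by counting in lattice coordinates: writing integers as $x+y\sqrt{-D}$, the ring becomes $\Z^2$ (resp.\ the half-integer lattice) and the disc $|z|\le B$ becomes the ellipse $x^2+Dy^2\le B^2$, which is shrunk/expanded \emph{coordinate-wise} by the half-width of the unit cell (each semiaxis by $1/\sqrt2$, a constant, not a quantity of order $\sqrt D$). Under $B\ge c\sqrt D$ this yields the factors $\bigl(1-\tfrac{\sqrt2}{c}\bigr)$ per variable and $\bigl(1+\tfrac{1}{c\sqrt2}\bigr)^{2}$ per form, and with $c=2\sqrt2$ these are exactly the source of $t_{\mathbb I}=\tfrac{5}{2\sqrt2}$ and $s_{\mathbb I}=\tfrac{2\sqrt2}{\sqrt\pi}D^{1/4}$. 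If you want to keep your isotropic set-up, you would have to replace translated fundamental cells by Voronoi cells and the circum-diameter by the covering radius $\tfrac12\sqrt{1+D}$ (resp.\ $\tfrac14\sqrt{9+D}$ is not the right quantity either--compute the true circumradius of the Voronoi cell); with the covering radius the same computation does reach, and in fact slightly improves, the stated constants. As written, however, the proposal proves only a weaker lemma with larger $s_{\mathbb I}$ and $t_{\mathbb I}$.
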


Notice that our constant $t_{\mathbb{I}}$ does not depend on $D$,
contrary to general results, see e.g. \cite{BOMBIERI}, where 
\[
t_{\mathbb{Q(\sqrt{-D})}}=2s_{\mathbb{Q(\sqrt{-D})}}=
\begin{cases}
2^8\sqrt{D},\quad D\equiv 1\ \text{or}\ 2\pmod{4};\\
2^7\sqrt{D},\quad D\equiv 3\pmod{4}.
\end{cases}
\]
On the other hand, we have to assume that the solution is always at least of the size $2c\sqrt{D}$. However, this assumption only affects small values of $A_i$. Asymptotically, this does not matter. Furthermore, one could choose the constant $c$ to be smaller than in this proof, but this would affect constants $t_{\mathbb{I}}$ and $s_{\mathbb{I}}$.

\begin{proof}

Assume $D\equiv 1,2\ (\bmod \ 4)$, and that $B\geq c\sqrt{D}$. In order to estimate the number of points in the ellipse $x^2+Dy^2\leq B^2$, allocate every integer point $(x_0,y_0)$ the square $\max(|x-x_0|,|y-y_0|)\leq \frac{1}{2}$. The number of points with integer coordinates in the ellipse $x^2+Dy^2\leq B^2$ is thus at least
\[
\pi\left(B-\frac{1}{\sqrt{2}}\right)\left(\frac{B}{\sqrt{D}}-\frac{1}{\sqrt{2}}\right)\geq  \pi B\left(\frac{B}{\sqrt{D}}-\sqrt{2}\right)\geq \pi B^2D^{-1/2}\left(1-\frac{\sqrt{2}}{c}\right).
\]
The number of values of the linear forms (\ref{LINFORMS}) with $|z_k|\le B$, $k=1,...,N$ is at most
\[
\prod_{i=1}^M\pi \left(A_iB+\frac{1}{\sqrt{2}}\right)\left(\frac{A_iB}{\sqrt{D}}+\frac{1}{\sqrt{2}}\right)
\leq B^{2M}D^{-M/2}\cdot \prod_{i=1}^M\left(\pi A_i^2\left(1+\frac{1}{c\sqrt{2}}\right)^2\right).
\]
Hence, the number of M-tuples with integer coordinates is  greater than the number of the values
of the linear forms  when
\[
\pi^N B^{2N}D^{-N/2}\left(1-\frac{\sqrt{2}}{c}\right)^N\geq \pi^M D^{-M/2}B^{2M}\cdot \left(1+\frac{1}{c\sqrt{2}}\right)^{2M}\prod_{i=1}^M A_i^2,
\]
i.e., when
\[
B\geq \frac{D^{1/4}}{\sqrt{\pi}}\cdot\left(1-\frac{\sqrt{2}}{c}\right)^{-N/(2N-2M)}\left(1+\frac{1}{c\sqrt{2}}\right)^{M/(N-M)} \left(\prod_{i=1}^M A_i\right)^{1/(N-M)}.
\]
Choose $c=2\sqrt{2}$. Now
\[
B\geq   \frac{\sqrt{2}D^{1/4}}{\sqrt{\pi}}\left(\frac{5}{2\sqrt{2}}\right)^{M/(N-M)} \left(\prod_{i=1}^M A_i\right)^{1/(N-M)}.
\]Let 
$$B'=\max\left(2\sqrt{2D},\frac{\sqrt{2}D^{1/4}}{\sqrt{\pi}}\left(\frac{5}{2\sqrt{2}}\right)^{M/(N-M)} \left(\prod_{i=1}^M A_i\right)^{1/(N-M)}\right).$$ 
By the pigeonhole principle, there is a solution when $|z_j|\leq 2B'$ for all $j$.

The proof is similar for $D\equiv 3 \ (\bmod \ 4)$, except that the integers in $\mathbb{Q}(\sqrt{D})$ are numbers of form $x+y\sqrt{D}$ where $x,y\in \mathbb{Z}$, when $D\equiv 1,2\ (\bmod \ 4)$ and $2x,2y\in \mathbb{Z}$, $2x\equiv 2y\ (\bmod \ 2)$ for $D\equiv 3\ (\bmod \ 4)$. Therefore, in order to count the number of points in an ellipse, we allocate every integer point $(x_0,y_0)$ the square consisting of numbers $(x,y)$ satisfying the condition $|x-x_0|+|y-y_0|\leq \frac{1}{2}$. The area of such a square is $\frac{1}{2}$. Furthermore, we choose $c=2$.\end{proof}

Taking into account that asymptotically the number of integers $(x,y)$ with $x^2+Dy^2\leq B^2$ is $\pi B^2D^{-1/2}$ when $D\equiv 1$ or $2\ (\bmod \ 4)$ and $2\pi B^2D^{-1/2}$, when $D\equiv 3\ (\bmod \ 4)$, we see that asymptotically, we have
\[
s_{\mathbb{Q(\sqrt{-D})}}=
\begin{cases}
&\frac{2}{\sqrt{\pi}}D^{1/4}; \\
&\frac{\sqrt{2}}{\sqrt{\pi}}D^{1/4};\\
\end{cases}
\quad  t_{\mathbb{Q(\sqrt{-D})}}=
\begin{cases}
& 1 \quad \text{if}\quad D\equiv 1,2\pmod{4}\\ 
& 1 \quad \text{if}\quad D\equiv 3\pmod{4};\\ 
\end{cases}
\]

\subsection{Construction of Hermite-Pad\'e type approximations}

The proof of the main Theorem \ref{BAKERTYPE} will be based on the 
construction the Hermite-Pad\'e type approximations 
(simultaneous functional approximations of type II)
for the vector  
$(1, e^{\alpha_1 t},...,e^{\alpha_m t})$.

Let $\nu_1,\ldots,\nu_m,l_1,\ldots,l_m \in \mathbb{Z}^+$, 
$\nu=\max\{\nu_1,...,\nu_m\}$ satisfy
\[
1 \le \nu_j\le l_j,\qquad M=\nu_1+\cdots+\nu_m\le L=l_1+\cdots+l_m.
\]
Write
\[
\overline{\alpha}=(\alpha_{1},...,\alpha_{m})^T,\quad 
\overline{\nu}(t)=(\nu_{1},...,\nu_{m})^T.
\]

\begin{Lemma}\label{existence}
There exists a non-zero polynomial 
\[
A_{0,0}(t) = \sum_{h=0}^L c_h \frac{L!}{h!}t^h \in \mathbb{Z}_{\mathbb{I}}[t]
\]
depending on $L$. Furthermore, there exist non-zero polynomials 
$A_{0,j}(t) \in \mathbb{Z}_{\mathbb{I}}[t,\overline{\alpha}]$, $j=1,\ldots,m$,
depending on $L$ and $\overline{\nu}$ ,
such that
\begin{equation}\label{2pade}
A_{0,0}(t)e^{\alpha_j t}+A_{0,j}(t)=
R_{0,j}(t), \qquad j=1,\ldots,m,
\end{equation}
where
\begin{equation}\label{asteorder}
\begin{cases}
\deg_t A_{0,j}(t) \le L, \qquad j=0,\ldots,m, \\
L+\nu_j+1\le\underset{t=0}\ord R_{0,j}(t) <\infty, \qquad j=1,\ldots,m.
\end{cases}
\end{equation}
Moreover, we have
\begin{equation}\label{chSiegel}
|c_h|\le \max\left\{2c\sqrt{D},
s_{\mathbb{I}} t_{\mathbb{I}}^{\frac{M}{L+1-M}} \left(g_2^{ML}g_4^{M^2/2}\right)^{\frac{1}{L+1-M}}\right\},
\quad h=0,1,...,L.
\end{equation}
\end{Lemma}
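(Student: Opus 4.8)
The plan is to construct the approximating polynomials explicitly as generalized Hermite--Pad\'e forms of the second kind, then pin down the size of the coefficients $c_h$ via Siegel's lemma in the sharpened form of Lemma \ref{Thuemax}. First I would set up the ansatz
\[
A_{0,0}(t)=\sum_{h=0}^{L}c_h\frac{L!}{h!}t^{h}\in\ZI[t],
\]
with the $c_h$ to be determined, and for each $j=1,\dots,m$ define the companion polynomial $A_{0,j}(t)$ to be the truncation of the formal power series $-A_{0,0}(t)e^{\alpha_j t}$ at degree $L$, so that $R_{0,j}(t):=A_{0,0}(t)e^{\alpha_j t}+A_{0,j}(t)$ automatically has $\ord_{t=0}R_{0,j}(t)\ge L+1$ and $\deg_t A_{0,j}\le L$ by construction. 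The remaining freedom in the $c_h$ (there are $L+1$ of them) is then used to kill the Taylor coefficients of $R_{0,j}$ in degrees $L+1,\dots,L+\nu_j$ for every $j$; this is a homogeneous linear system in the $c_h$ with $M=\nu_1+\dots+\nu_m$ equations and $L+1$ unknowns, and since $M\le L<L+1$ the system is underdetermined, so a nonzero solution exists. One also needs to check $\ord_{t=0}R_{0,j}(t)<\infty$, which holds because the $\alpha_j$ are distinct and nonzero together with $\alpha_0=0$, so a nonvanishing linear combination of $1,e^{\alpha_1 t},\dots$ cannot be identically zero; the genericity/nonvanishing of $A_{0,j}$ follows similarly.

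Next I would compute the coefficients $a_{mn}$ of these linear forms explicitly. The Taylor coefficient of $A_{0,0}(t)e^{\alpha_j t}$ in degree $L+k$ is
\[
\sum_{h=0}^{L}c_h\frac{L!}{h!}\cdot\frac{\alpha_j^{\,L+k-h}}{(L+k-h)!},
\]
and multiplying through by a suitable factorial and by $g_1^{L}$ (to clear the denominators $y_j$ of $\alpha_j=x_j/y_j$) turns each such coefficient into an element of $\ZI$. The integer $A_m=\sum_n|a_{mn}|$ attached to the $m$-th form is then estimated by bounding each $|\alpha_j|^{L+k-h}$ and the binomial/multinomial factors; this is where the quantities $g_2=\max\{|x_j|+|y_j|\}$ and $g_4=\max\{1+|y_j|/|x_j|\}$ enter. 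The goal is to show $A_1\cdots A_M\le g_2^{ML}g_4^{M^2/2}$ (perhaps after absorbing harmless factors), using that $k\le\nu_j\le\nu$ and $\sum\nu_j=M$, so that the $g_4$-contributions accumulate like $\sum_j\nu_j^2/2\le M^2/2$ by convexity, and the $g_2$-contributions like $ML$.

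Finally I would invoke Lemma \ref{Thuemax} with $N=L+1$ variables and $M$ forms: it yields a nonzero integer solution $(c_0,\dots,c_L)$ with
\[
\max_h|c_h|\le\max\Bigl\{2c\sqrt{D},\ s_{\mathbb{I}}t_{\mathbb{I}}^{M/(L+1-M)}(A_1\cdots A_M)^{1/(L+1-M)}\Bigr\},
\]
and substituting the product bound $A_1\cdots A_M\le g_2^{ML}g_4^{M^2/2}$ gives exactly \eqref{chSiegel}. The main obstacle I anticipate is the bookkeeping in the second step: getting the factorials, the normalizing factor $L!/h!$ built into $A_{0,0}$, and the clearing of denominators to align so that the coefficients really land in $\ZI$ and the product $A_1\cdots A_M$ is bounded by precisely $g_2^{ML}g_4^{M^2/2}$ rather than something slightly larger. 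The inequalities \eqref{gggggg} among the $g_i$ should be what makes this come out cleanly, and the choice to put $L!/h!$ (rather than $1/h!$) into $A_{0,0}$ is presumably dictated by exactly this integrality requirement.
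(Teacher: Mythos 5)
Your overall strategy is the same as the paper's: make the ansatz $A_{0,0}(t)=\sum_h c_h\frac{L!}{h!}t^h$, let $A_{0,j}$ be (minus) the degree-$L$ truncation of $A_{0,0}(t)e^{\alpha_j t}$, impose the vanishing of the Taylor coefficients in degrees $L+1,\dots,L+\nu_j$ to get $M$ homogeneous equations in the $L+1$ unknowns $c_h$, and then call Siegel's lemma (Lemma \ref{Thuemax}). The existence, degree and order statements come out exactly as you say. The gap is in the one step you yourself flag as the anticipated obstacle: how the $M$ equations are normalized so that their coefficients lie in $\ZI$ \emph{and} their row sums $A_i$ multiply up to exactly $g_2^{ML}g_4^{M^2/2}$. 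Your proposed normalization, ``multiplying through by a suitable factorial and by $g_1^L$'', does not even achieve integrality: the coefficient of $t^{L+i_j}$ in $A_{0,0}(t)e^{\alpha_j t}$ involves $\alpha_j^{\,n}=x_j^n/y_j^n$ with $n$ as large as $L+\nu_j$, so clearing denominators needs $y_j^{L+i_j}$ (or $g_1^{L+\nu_j}$), not $g_1^{L}$. More importantly, if you only multiply the $(L+i_j)$-th equation by $\frac{(L+i_j)!}{L!}y_j^{L+i_j}$, the resulting integer coefficients are $\binom{L+i_j}{h}x_j^{L+i_j-h}y_j^h$, whose row sum is bounded by $(|x_j|+|y_j|)^{L+i_j}\le g_2^{L+i_j}$; the product over all equations is then only $\le g_2^{ML+M^2/2}$, which is weaker than the claimed bound whenever $g_4<g_2$ and does not give \eqref{chSiegel}.

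The missing idea in the paper is to normalize each equation $r_{L+i_j,j}=0$ by $\frac{(L+i_j)!}{L!}\,y_j^{L+i_j}x_j^{-i_j}$, i.e.\ to \emph{divide} by $x_j^{i_j}$ as well (legitimate since $x_j\neq 0$ for $j\ge 1$, the $\alpha_j$ being distinct from $\alpha_0=0$). This turns the coefficients into $\binom{L+i_j}{h}x_j^{L-h}y_j^h\in\ZI$ (the exponent $L-h\ge 0$), and the row sum is then estimated by
\[
|x_j|^{-i_j}\sum_{h}\left|\binom{L+i_j}{h}x_j^{L+i_j-h}y_j^h\right|
\le (|x_j|+|y_j|)^{L}\Bigl(1+\tfrac{|y_j|}{|x_j|}\Bigr)^{i_j}\le g_2^{L}g_4^{i_j},
\]
so that the product over $i_j=1,\dots,\nu_j$ and $j=1,\dots,m$ is $\le g_2^{ML}g_4^{\sum_j \nu_j(\nu_j+1)/2}\le g_2^{ML}g_4^{M^2/2}$, which is what feeds into Lemma \ref{Thuemax} to give \eqref{chSiegel}. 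Your convexity heuristic for the exponent $M^2/2$ is the right count, but without the $x_j^{-i_j}$ twist there is no mechanism in your argument producing a $g_4$-power at all, so as written the proof establishes only a weaker version of the lemma; the rest of your outline (including the finiteness of $\ord_{t=0}R_{0,j}$ and the nonvanishing of the $A_{0,j}$) is fine.
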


\begin{proof} Let
\[
A_{0,0}(t)e^{\alpha_j t}=\sum_{N=0}^{\infty}r_{N,j}t^N, \qquad j=1,...,m,
\]
where $r_{N,j}=\sum_{N=h+n}c_h\frac{L!}{h!} \frac{\alpha_j^n}{n!}$. Write now
\[
A_{0,j}(t)= \sum_{N=0}^Lr_{N,j}t^N, \qquad j=1,...,m.
\]
Notice that if $c_h \in \mathbb{Z}_{\mathbb{I}}$ for $h=0, \ldots, L$, then $g_1^LA_{0,j}(t)\in \mathbb{Z}_{\mathbb{I}}[t]$ for $j=1,\ldots,m$. Set
\begin{equation}\label{rLkiszero}
r_{L+i_j,j}=0
\end{equation}
for $i_j=1,...,\nu_j$, $j=1,...,m$.
Then we multiply equation \eqref{rLkiszero} by $\frac{(L+i_j)!}{L!} y_j^{L+i_j}x_j^{-i_j}$, which gives $M$ equations
\begin{equation}\label{EQINT}
\sum_{\underset{0\le h\le L}{L+i_j=h+n}} \binom{L+i_j}{h}x_j^{L-h}y_j^h c_h=0,\quad  i_j=1,...,\nu_j, j=1,...,m,
\end{equation}
with integer coefficients in $L+1$ unknowns $c_h$, $0\le h\le L$. Further, the coefficients in (\ref{EQINT}) satisfy
\[
|x_j|^{-i_j}\sum_{\underset{0\le h\le L}{L+i_j=h+n}} \left|\binom{L+i_j}{h}x_j^{L+i_j-h}y_j^h\right|<
\left(|x_j|+|y_j|\right)^{L}\left(1+\frac{|y_j|}{|x_j|}\right)^{i_j},\quad  i_j=1,...,\nu_j, j=1,...,m.
\]
Thus, by Siegel's lemma, Lemma \ref{Thuemax}, there exists a solution $(c_0,\ldots,c_L)^T\in\mathbb{Z}_{\mathbb{I}}^{L+1}\setminus\{ \overline{0}\}$ with 
\[
|c_h|\le \max\left\{2c\sqrt{D}, 
s_{\mathbb{I}} t_{\mathbb{I}}^{\frac{M}{L+1-M}}
\left(\Pi_1\Pi_2\right)^{\frac{1}{L+1-M}}\right\},
\]
where
\begin{alignat*}{1}
\Pi_1&=\prod_{i_j,j} \left(|x_j|+|y_j|\right)^{L}=\prod_{j} \left(|x_j|+|y_j|\right)^{\nu_jL}\le g_2^{ML}\\
\Pi_2&=\prod_{i_j,j}\left(1+\frac{|y_j|}{|x_j|}\right)^{i_j}\le 
\prod_{j}\left(1+\frac{|y_j|}{|x_j|}\right)^{ \frac{\nu_j(\nu_j+1)}{2}} \le g_4^{M^2/2}.
\end{alignat*}
Writing
\[
R_{0,j}(t) = \sum_{N=L+\nu_j+1}^{\infty} r_{N,j} t^{N},\qquad j=1,\ldots,m,
\]
we get
\begin{equation}\label{pade20}
A_{0,0}(t)e^{\alpha_j t}+A_{0,j}(t)=R_{0,j}(t),\qquad j=1,\ldots,m,
\end{equation}
where $A_{0,j}(t)$ are non-zero polynomials for all $j=0,1,...,m$ and
\[
\begin{cases}
\deg_tA_{0,j}(t) \le L, \qquad  j=0,\ldots,m, \\
L+\nu_j+1\le \underset{t=0}\ord R_{0,j}(t) <\infty, \qquad j=1,\ldots,m,
\end{cases}
\]
which means that we have Hermite-Pad\'e type approximations for the vector  
$(1, e^{\alpha_1 t},...,e^{\alpha_m t})$.\end{proof}

Now we need to construct more linear forms. For this purpose, we define recursively
\[
\begin{cases}
A_{k+1,j}(t)=A'_{k,j}(t)-\alpha_jA_{k,j}(t), \qquad j = 0,\ldots,m, \\
R_{k+1,j}(t)=R'_{k,j}(t)-\alpha_jR_{k,j}(t), \qquad j = 1,\ldots,m.
\end{cases}
\]
By \eqref{pade20} and repeated application of the differential operator $D=\tfrac{d}{dt}$ we get 
\begin{equation}\label{pade2k}
A_{k,0}(t)e^{\alpha_jt}+A_{k,j}(t)=R_{k,j}(t), \qquad j=1,\ldots,m,\quad k=0,1,... .
\end{equation}
Further, we have
\begin{equation}\label{INEQ}
\begin{cases}
\deg_tA_{k,0}(t) = \deg_tA_{0,0}(t)-k, \\
\deg_tA_{k,j}(t) = \deg_tA_{0,j}(t), \qquad j=1,\ldots,m, \\
\underset{t=0}\ord R_{k,j}(t) = \underset{t=0}\ord R_{0,j}(t)-k, \qquad j=1,2,\ldots,m. 
\end{cases}
\end{equation}

These linear forms need to be linearly independent, and therefore, we now consider the determinant of a matrix defined by
\begin{equation}\label{MATRIXA}
\mathcal{A}(t) =
\left(\begin{matrix}
 A_{0,0}(t) & A_{0,1}(t) & \cdots & A_{0,m}(t) \\
 A_{1,0}(t) & A_{1,1}(t) & \cdots & A_{1,m}(t) \\
\vdots & \vdots & & \vdots \\
 A_{m,0}(t) & A_{m,1}(t) & \cdots & A_{m,m}(t)
\end{matrix}\right)
\end{equation}
and denote its  determinant by
$\Delta(t) =\det\mathcal{A}(t) $.
Note that
\[
\Delta(t) =\left|\begin{matrix}
A_{0,0}(t) & R_{0,1}(t) & \cdots & R_{0,m}(t) \\
A_{1,0}(t) & R_{1,1}(t) & \cdots & R_{1,m}(t) \\
\vdots & \vdots & & \vdots \\
A_{m,0}(t) & R_{m,1}(t) & \cdots & R_{m,m}(t)
\end{matrix}\right|.
\]

\begin{Lemma}\label{DETERMINANTTI}
There exist $m+1$ different
indices $s(0), s(1), \ldots,s(m)$ in the set $0,1,\ldots,L-M+m(m+1)/2$ such that the determinant
\[
\overline{\Delta}(1) = \left|\begin{matrix}
A_{s(0),0}(1) & A_{s(0),1}(1) & \cdots & A_{s(0),m}(1) \\
A_{s(1),0}(1) & A_{s(1),1}(1) & \cdots & A_{s(1),m}(1) \\
\vdots & \vdots &  & \vdots \\
A_{s(m),0}(1) & A_{s(m),1}(1) & \cdots & A_{s(m),m}(1)
\end{matrix} \right|
\]
is nonzero.
\end{Lemma}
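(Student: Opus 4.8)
The plan is to show that the polynomial $\Delta(t) = \det\mathcal{A}(t)$ is not identically zero and to control its order of vanishing at $t=0$; once that is done, a pigeonhole argument on the rows will produce the required $m+1$ indices. First I would analyze $\Delta(t)$ using the second determinant representation, the one whose first column is $A_{k,0}(t)$ and whose remaining columns are the remainders $R_{k,j}(t)$. Expanding along the first column, $\Delta(t)$ is a $\mathbb{Z}_{\mathbb{I}}$-linear combination of products $A_{k,0}(t)\cdot(\text{an }m\times m\text{ minor of the }R\text{'s})$. By \eqref{INEQ} each $R_{k,j}$ has order at least $L+\nu_j+1-k$ at $t=0$, so each $m\times m$ minor formed from rows $k$ in some set and columns $1,\dots,m$ has order at least $\sum_{j=1}^m(L+\nu_j+1)-\sum k = mL+M+m-\sum_{k}k$; multiplying by $A_{k,0}$ (which has order $\ge 0$) and summing, one reads off a lower bound for $\operatorname{ord}_{t=0}\Delta(t)$ of the form $mL+M+m - \binom{m+1}{2}$ or so — this is the usual Hermite–Padé order estimate, and the exact constant is a routine bookkeeping computation I would carry out carefully.

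Next I would show $\Delta(t)\not\equiv 0$. The cleanest route is to compute $\operatorname{ord}_{t=0}\Delta(t)$ from the \emph{other} representation as well. Going back to $\mathcal{A}(t)$ in \eqref{MATRIXA}: the first column entries $A_{k,0}(t)$ have degree $\deg A_{0,0}-k$, and the recursion $A_{k+1,j}=A'_{k,j}-\alpha_j A_{k,j}$ shows $\Delta(t)$ satisfies a simple first-order linear ODE (differentiating the determinant row by row and using that each differentiated row equals the next row plus $\alpha_j$-multiples, so $\Delta'(t) = (\alpha_1+\dots+\alpha_m - 0)\Delta(t)$ up to sign, i.e. $\Delta(t)=\Delta(0)e^{(\sum\alpha_j)t}$ — but $\Delta$ is a polynomial, forcing it to be a monomial-free constant times... ). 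Actually the honest statement is that $\Delta(t)$ is, up to a sign, the Wronskian-type determinant which by the standard theory of Padé systems has order \emph{exactly} $mL+M+m-\binom{m+1}{2}$ and is a nonzero polynomial whose leading coefficient is a nonzero multiple of the product of the nonzero leading coefficients of the $A_{0,j}$; the non-vanishing of all these polynomials is guaranteed by Lemma \ref{existence}. I expect the main obstacle to be exactly this step — rigorously certifying $\Delta(t)\not\equiv 0$ — since it requires either the ODE argument above or an explicit identification of a nonzero coefficient, and one must be careful that the $\alpha_j$ being distinct is used.

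Finally, with $\Delta(t)\not\equiv 0$ in hand and $\deg_t\Delta(t)\le L-M+m(m+1)/2$ (from $\deg A_{k,0}=\deg A_{0,0}-k$ together with $\deg A_{0,0}\le L$ and summing the column degree drops), I would argue as follows. Write $\Delta(t)$ as a $\mathbb{Z}_{\mathbb{I}}$-linear combination, via the generalized Laplace (Cauchy–Binet) expansion along rows, of the $(m+1)\times(m+1)$ minors
\[
\overline{\Delta}_S(t) = \det\bigl(A_{s,j}(t)\bigr)_{s\in S,\ 0\le j\le m}
\]
as $S$ ranges over $(m+1)$-subsets of $\{0,1,\dots,L-M+m(m+1)/2\}$, using the fact that every row $(A_{k,0},\dots,A_{k,m})$ with $k$ in that range is a linear combination of the first such rows — more simply, each $A_{k,j}$ for $k$ large is, by the recursion, a fixed $\mathbb{Z}_{\mathbb{I}}$-linear combination of $A_{0,j},\dots,A_{L-M+m(m+1)/2,j}$ independent of $j$, so $\Delta(t)$ expands with polynomial coefficients into the minors $\overline{\Delta}_S(t)$. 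Since $\Delta(t)\not\equiv 0$, at least one $\overline{\Delta}_S(t)\not\equiv 0$; choosing $s(0),\dots,s(m)$ to be the elements of that $S$ and noting that a nonzero polynomial cannot vanish at every point, we may (after possibly enlarging the index range slightly, which we did by allowing the set $\{0,\dots,L-M+m(m+1)/2\}$) ensure $\overline{\Delta}_S(1)\ne 0$ — or, more carefully, evaluate the identity at $t=1$: $\Delta(1)$ is a linear combination of the $\overline{\Delta}_S(1)$ with coefficients that are the evaluated polynomial weights, and if $\Delta(1)\ne 0$ we are done immediately, while if $\Delta(1)=0$ we instead pick $t$ in a neighbourhood — but since the statement insists on $t=1$, the correct move is to observe that the weight polynomials are in fact constants (the linear dependence of the rows is over $\mathbb{Z}_{\mathbb{I}}$, not $\mathbb{Z}_{\mathbb{I}}[t]$, because the recursion coefficients are constants), so $\Delta(1) = \sum_S c_S\,\overline{\Delta}_S(1)$ with $c_S\in\mathbb{Z}_{\mathbb{I}}$; from $\Delta(1)\ne 0$ (which follows because $\operatorname{ord}_{t=0}\Delta > 0$ does not prevent $\Delta(1)\ne 0$, and in fact $\Delta$ has degree $< $ its order would be absurd, so $\Delta$ is a genuine nonzero polynomial and one checks $\Delta(1)\ne 0$ directly from its explicit form) we conclude some $\overline{\Delta}_S(1)\ne 0$. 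This last paragraph is where I would need to be most careful about whether the conclusion holds verbatim at $t=1$ or only after a harmless shift, and I would resolve it by tracking the explicit top-degree behaviour of $\Delta(t)$.
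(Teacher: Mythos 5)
Your proposal has two genuine gaps, one reparable and one fatal. First, the non-vanishing $\Delta(t)\not\equiv 0$ is never actually proved: the ODE heuristic is wrong as stated (differentiating a row of $\mathcal{A}(t)$ gives the next row \emph{plus} a column-rescaled copy of the same row, so $\Delta'\neq(\alpha_1+\cdots+\alpha_m)\Delta$; what is true is that $e^{-(\alpha_0+\cdots+\alpha_m)t}\Delta(t)$ is the Wronskian of the functions $A_{0,j}(t)e^{-\alpha_j t}$), and appealing to ``the standard theory of Pad\'e systems'' for the exact order is an assertion, not an argument. The paper closes this step concretely: by \eqref{INEQ} the leading coefficient of $A_{k,0}(t)$ is a nonzero multiple of $a_{0,0}$ and that of $A_{k,j}(t)$ is $a_{0,j}(-\alpha_j)^k$, so the top coefficient of $\Delta(t)$ is $\pm a_{0,0}a_{0,1}\cdots a_{0,m}\det(\alpha_j^k)_{1\le j,k\le m}\neq 0$ because the $\alpha_j$ are distinct (and nonzero for $j\ge1$). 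Your Wronskian remark could be made rigorous instead, but as written this step is missing.

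The fatal gap is at $t=1$. Your final paragraph ultimately rests on $\Delta(1)\neq 0$, which you do not prove and which need not hold: $\Delta(t)=t^{mL+M-m(m-1)/2}h(t)$ with $\deg h\le a:=L-M+m(m-1)/2$, and nothing prevents $h(1)=0$. Indeed, if $\Delta(1)\neq0$ were automatic the lemma would be trivial with $s(k)=k$; the only reason indices up to $L-M+m(m+1)/2$ are allowed is precisely that $\Delta$ and several of its derivatives may vanish at $1$. Your supporting claim that each row $(A_{k,0},\dots,A_{k,m})$ for large $k$ is a constant-coefficient combination, independent of $j$, of the first rows is false, since the recursion $A_{k+1,j}=A_{k,j}'-\alpha_jA_{k,j}$ involves differentiation and the $j$-dependent factor $\alpha_j$; hence there is no Laplace/Cauchy--Binet expansion of $\Delta$ into the minors $\overline{\Delta}_S$ of the kind you describe, and the ``evaluate and pick a nonzero minor'' step collapses. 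The paper's mechanism, absent from your proposal, is: since $\deg\Delta\le(m+1)L$ while the order of vanishing at $t=0$ is at least $mL+M-m(m-1)/2$, the point $t=1$ cannot be a zero of order exceeding $a$, so there is a minimal $b\le a$ with $\Delta^{(b)}(1)\neq0$; then, with the cofactor matrix $\mathcal{C}(t)$ of \eqref{MATRIXAC}, one differentiates $\Delta(t)\overline{e}(t)=\mathcal{C}(t)\overline{w}_0(t)$ exactly $b$ times and evaluates at $t=1$, where the lower-order terms drop out, obtaining $\Delta^{(b)}(1)\overline{e}(1)=\sum_{k=0}^{b}\binom{b}{k}\mathcal{C}^{(k)}(1)\overline{w}_{b-k}(1)$; this forces the matrix $\left(A_{i,j}(1)\right)_{0\le i\le m+b,\,0\le j\le m}$ to have full rank $m+1$, i.e.\ some $(m+1)\times(m+1)$ minor with row indices in $\{0,\dots,m+b\}\subseteq\{0,\dots,L-M+m(m+1)/2\}$ is nonzero. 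Without this (or an equivalent) device for handling possible vanishing at $t=1$, your argument does not prove the lemma.
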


\begin{proof}
By \eqref{asteorder}  and \eqref{INEQ}, we have $\deg_t \Delta(t) \le (m+1)L$ and  $\underset{t=0}\ord \Delta(t) \ge mL+M-m(m-1)/2$. Hence
\begin{equation}\label{DETPOL}
\Delta(t) = t^{mL+M-m(m-1)/2}h(t),\quad 
\deg(h(t)) \le a:=L-M+m(m-1)/2.
\end{equation}
If we now denote the highest degree coefficients of the polynomials 
$A_{0,0}(t)$ and $A_{0,j}(t)$ by $a_{0,0}$ and $a_{0,j}$, respectively, then 
the highest degree coefficients of the polynomials $A_{k,0}(t)$ and 
$A_{k,j}(t)$ are 
\[
a_{0,0}\prod_{i=0}^{k-1}(\deg(A_{0,0}(t))-k+1+i)\quad\text{and}\quad a_{0,j}{(-\alpha_j)}^k,
\]
respectively. The highest degree coefficient of the polynomial $\Delta(t)$ is therefore
\[
(-1)^{\lfloor m/2 \rfloor}a_{0,0}a_{0,1}\cdots a_{0,m}
\left|
\begin{matrix}
\alpha_1 & \alpha_2 & \cdots & \alpha_m \\
\alpha_1^2 & \alpha_2^2 & \cdots &\alpha_m^2 \\
\vdots & \vdots & & \vdots \\
\alpha_1^m & \alpha_2^m & \cdots & \alpha_m^m
\end{matrix}
\right| \neq 0,
\]
for the numbers $\alpha_j$ are different. Thus, $\Delta(t)$ is a nonzero polynomial. By (\ref{DETPOL}) there exists an integer $0\le b\le  a$ such that $\Delta^{(b)}(1) \ne 0,\quad \Delta^{(c)}(1) =0$ for all $c<b$.

Define the linear combinations
\begin{equation}\label{WEXP}
w_i(t) = A_{i,0}(t)e^{-\alpha_0 t}+A_{i,1}(t)e^{-\alpha_1 t}+\cdots+A_{i,m}(t)e^{-\alpha_m t}
\end{equation}
for $i = 0,1,\ldots,m$.
Derivation with respect to $t$ gives
\begin{equation}\label{Wderivaatta}
w_i'(t)  = A_{i+1,0}(t)e^{-\alpha_0 t}+A_{i+1,1}(t)e^{-\alpha_1 t}+\cdots+A_{i+1,m}(t)e^{-\alpha_m t} =
 w_{i+1}(t)
\end{equation}
by the definition of the polynomials $A_{i,j}$.

Next we define a matrix 
\begin{equation}\label{MATRIXC}
\mathcal{C}(t) =\left(\Delta_{i,j}(t)\right)_{i,j=0,1,...,m}
\end{equation}
by the cofactors $\Delta_{i,j}(t)$ of $\mathcal{A}(t)$ satisfying
\begin{equation}\label{MATRIXAC}
\mathcal{A}(t)\mathcal{C}(t) =\mathcal{C}(t)\mathcal{A}(t)=\Delta(t).
\end{equation}
Write
\[
\overline{w}_k(t)=(w_{k}(t),w_{k+1}(t),...,w_{k+m}(t))^T,\quad \overline{e}(t)=
(e^{-\alpha_0 t},e^{-\alpha_1 t},...,e^{-\alpha_m t})^T.
\]
By \eqref{Wderivaatta} we see that $\overline{w}'_{k}(t)=\overline{w}_{k+1}(t)$ and by \eqref{WEXP} we may write $\overline{w}_{0}(t)=\mathcal{A}(t)\overline{e}(t)$ yielding
\[
\Delta(t)\overline{e}(t)= \mathcal{C}(t)\overline{w}_{0}(t).
\]
Differentiating $b$ times gives
\[
\sum_{k=0}^{b}{b\choose k}\Delta^{(k)}(t)\overline{e}^{(b-k)}(t)=
 \sum_{k=0}^{b}{b\choose k}\mathcal{C}^{(k)}(t)\overline{w}^{(b-k)}_{0}(t),
\]
and we obtain
\[
\Delta^{(b)}(1)\overline{e}(1)=
 \sum_{k=0}^{b}{b\choose k}\mathcal{C}^{(k)}(1)\overline{w}_{b-k}(1).
\]

Thus, the $m+1$ linearly independent numbers $1=e^{-\alpha_0}, e^{-\alpha_1},\ldots,e^{-\alpha_m}$ 
can be represented by linear combinations of the $m+b+1$ numbers 
$w_0(1), w_1(1), \ldots, w_{m+b}(1)$. Therefore, there exist $m+1$ different
indices $s(0), s(1), \ldots,s(m)$ in the set $0,1,\ldots,m+b$ such that the determinant
\[
\overline{\Delta}(1) = \left|\begin{matrix}
A_{s(0),0}(1) & A_{s(0),1}(1) & \cdots & A_{s(0),m}(1) \\
A_{s(1),0}(1) & A_{s(1),1}(1) & \cdots & A_{s(1),m}(1) \\
\vdots & \vdots &  & \vdots \\
A_{s(m),0}(1) & A_{s(m),1}(1) & \cdots & A_{s(m),m}(1)
\end{matrix} \right|
\]
is nonzero. This completes the proof of the lemma.

\end{proof}

\subsection{Numerical linear forms}

In order to obtain good approximations for linear forms, we need good upper bounds for the coefficients and remainders that are used. Therefore, we consider the numerical linear forms $B_{k,0} e^{\alpha_j}+ B_{k,j}  = L_{k,j}$ with $j=1,...,m$, where
\[
B_{k,j}(t) := g_1^L A_{s(k),j}(t),\quad B_{k,j}=B_{k,j}(1),\quad j=0,1,...,m
\]
and
\[
L_{k,j}(t)   := g_1^L R_{s(k),j}(t),\quad L_{k,j}=L_{k,j}(1),\quad j=1,...,m.
\]

The following lemma tells that the linear forms $L_{k,j}=B_{k,0} e^{\alpha_j}+ B_{k,j}$
have coefficients $B_{k,j}\in \mathbb{Z}_{\mathbb{I}}$. Furthermore,  it gives necessary estimates for the 
coefficients   $B_{k,0}$ and the remainders $L_{k,j}$.

\begin{Lemma}\label{Estimates}
For all $k\in\mathbb{N}$ we have $B_{k,j}  \in\mathbb{Z}_{\mathbb{I}}$ when $j=0,1,...,m$,
\begin{equation}\label{Upperboundden}
|B_{k,0}| \le eg_1^L L! \max\{|c_{h}|\};
\end{equation}
\begin{equation}\label{Upperboundrem}
|L_{k,j}|\le g_1^L(1+|\alpha_j|)^{L+\nu_j+1-s(k)}e^{1+|\alpha_j|}
\frac{L!}{(L+\nu_j+1-s(k))!}
\max\{|c_{h}|\},\quad j=1,...,m,
\end{equation}
and
\begin{equation}\label{skleLM}
s(k)\le L-M+m(m+1)/2.
\end{equation}
\end{Lemma}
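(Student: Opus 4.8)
The plan is to establish the three claims of Lemma~\ref{Estimates} by unwinding the recursive definitions of $A_{k,j}$ and $R_{k,j}$ and combining them with the Siegel bound \eqref{chSiegel}. First I would prove integrality: since $A_{0,0}(t)=\sum_h c_h\frac{L!}{h!}t^h$ has coefficients in $\ZI$ (the $c_h$ are, and $\frac{L!}{h!}\in\Z^+$ for $h\le L$), and $A_{0,j}(t)$ has coefficients $r_{N,j}=\sum_{h+n=N}c_h\frac{L!}{h!}\frac{\alpha_j^n}{n!}$, multiplying by $g_1^L=\lcm(y_0,\dots,y_m)^L$ clears denominators coming from $\alpha_j^n/n!$ in the relevant range, as was already observed in the proof of Lemma~\ref{existence}. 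The recursion $A_{k+1,j}=A'_{k,j}-\alpha_j A_{k,j}$ preserves this: differentiation keeps coefficients of the form (integer)$\times\frac{L!}{h!}$, and the factor $g_1^L$ continues to absorb the $\alpha_j$; hence $B_{k,j}=g_1^L A_{s(k),j}(1)\in\ZI$ for all $j=0,\dots,m$ and all $k$.

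Next, for \eqref{Upperboundden}: $A_{k,0}(t)$ is obtained from $A_{0,0}(t)=\sum_{h=0}^L c_h\frac{L!}{h!}t^h$ by applying $\tfrac{d}{dt}$ exactly $s(k)$ times (note $A_{k+1,0}=A'_{k,0}-\alpha_0 A_{k,0}=A'_{k,0}$ since $\alpha_0=0$). Differentiating $t^h$ repeatedly and evaluating at $t=1$, the coefficient of $c_h$ becomes $\frac{L!}{h!}\cdot\frac{h!}{(h-s(k))!}=\frac{L!}{(h-s(k))!}$ for $h\ge s(k)$, so $|A_{s(k),0}(1)|\le\max_h|c_h|\sum_{h=s(k)}^L\frac{L!}{(h-s(k))!}\le L!\max_h|c_h|\sum_{j\ge0}\frac1{j!}=e\,L!\max_h|c_h|$, and multiplying by $g_1^L$ gives \eqref{Upperboundden}.

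For the remainder bound \eqref{Upperboundrem} I would start from $R_{0,j}(t)=\sum_{N\ge L+\nu_j+1}r_{N,j}t^N$ and track the recursion $R_{k+1,j}=R'_{k,j}-\alpha_j R_{k,j}$. The cleanest route is to observe that $A_{0,0}(t)e^{\alpha_j t}$ and $R_{0,j}(t)$ agree in all Taylor coefficients of order $\ge L+\nu_j+1$, and that the operator $\frac{d}{dt}-\alpha_j$ applied to $f(t)e^{\alpha_j t}$ yields $f'(t)e^{\alpha_j t}$; thus $R_{k,j}(t)e^{-\alpha_j t}$ is (up to sign) the $s(k)$-th derivative of $A_{0,0}(t)e^{\alpha_j t}e^{-\alpha_j t}$ restricted to its high-order tail — more precisely $R_{s(k),j}(t)=\big(\tfrac{d}{dt}\big)^{s(k)}R_{0,j}(t)$ by \eqref{INEQ}-type reasoning, since $R_{k+1,j}=R'_{k,j}-\alpha_j R_{k,j}$ is exactly the derivative recursion that produced \eqref{pade2k}. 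Writing $R_{0,j}(t)=\sum_{N\ge L+\nu_j+1}r_{N,j}t^N$ with $r_{N,j}=\sum_{h+n=N}c_h\frac{L!}{h!}\frac{\alpha_j^n}{n!}$, one bounds $|r_{N,j}|\le\max_h|c_h|\,L!\sum_{h+n=N,\,h\le L}\frac{|\alpha_j|^n}{h!\,n!}\le\max_h|c_h|\,L!\,\frac{(1+|\alpha_j|)^N}{N!}\cdot\frac{N!}{L!}$ — being careful with the binomial-type estimate $\sum_{h+n=N}\frac{L!}{h!n!}|\alpha_j|^n$; then differentiating $s(k)$ times at $t=1$ multiplies the $t^N$ term by $\frac{N!}{(N-s(k))!}$, and summing the resulting geometric-type series over $N\ge L+\nu_j+1$ produces the factor $(1+|\alpha_j|)^{L+\nu_j+1-s(k)}e^{1+|\alpha_j|}\frac{L!}{(L+\nu_j+1-s(k))!}$. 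Multiplying by $g_1^L$ gives \eqref{Upperboundrem}. Finally \eqref{skleLM} is immediate from Lemma~\ref{DETERMINANTTI}, whose conclusion places $s(0),\dots,s(m)$ in $\{0,1,\dots,L-M+m(m+1)/2\}$ — indeed that set is exactly $\{0,\dots,m+b\}$ with $b\le a=L-M+m(m-1)/2$, and $a+m=L-M+m(m+1)/2$.

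The main obstacle I anticipate is the remainder estimate \eqref{Upperboundrem}: one must correctly identify that the $s(k)$-fold recursion on $R_{k,j}$ amounts to plain differentiation (so that the $\alpha_j$ from the recursion telescopes away against $e^{\alpha_j t}$), and then control the tail sum $\sum_{N\ge L+\nu_j+1}\frac{N!}{(N-s(k))!}\frac{(1+|\alpha_j|)^N}{N!}$ cleanly enough to extract precisely the claimed closed form, including the awkward shift $L+\nu_j+1-s(k)$ in both the exponent and the factorial. Keeping the combinatorial factor $\frac{L!}{h!n!}$ bounded by $\frac{(1+|\alpha_j|)^{h+n}}{?}$ in a way compatible with the final factorial $\frac{L!}{(L+\nu_j+1-s(k))!}$ is the delicate bookkeeping step; everything else is routine.
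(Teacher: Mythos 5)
The parts of your plan dealing with \eqref{Upperboundden} and \eqref{skleLM} are correct and essentially the paper's own argument: since $\alpha_0=0$ the recursion gives $A_{s(k),0}=D^{s(k)}A_{0,0}$, whence $|B_{k,0}|\le e\,g_1^LL!\max|c_h|$, and $s(k)\le m+b\le m+a=L-M+m(m+1)/2$ comes straight from Lemma \ref{DETERMINANTTI}. The genuine gap is in your treatment of \eqref{Upperboundrem}: the identity $R_{s(k),j}=D^{s(k)}R_{0,j}$ is false. The recursion is $R_{k+1,j}=R_{k,j}'-\alpha_jR_{k,j}$, and the operator fact you quote, $(D-\alpha_j)\bigl(f e^{\alpha_jt}\bigr)=f'e^{\alpha_jt}$, yields the conjugated derivative $R_{s(k),j}=e^{\alpha_jt}D^{s(k)}\bigl(e^{-\alpha_jt}R_{0,j}(t)\bigr)$, not the plain one; you silently drop the conjugation. (A direct check: $D^{K}R_{0,j}=\sum_{i}\binom{K}{i}\alpha_j^{K-i}A_{0,0}^{(i)}e^{\alpha_jt}+A_{0,j}^{(K)}$, which is not $A_{K,0}e^{\alpha_jt}+A_{K,j}=R_{K,j}$.) Moreover, even granting your identity, the bookkeeping you sketch gives $\sum_{N\ge L+\nu_j+1}\frac{N!}{(N-s(k))!}|r_{N,j}|\le \max|c_h|\,L!\sum_{N\ge L+\nu_j+1}\frac{(1+|\alpha_j|)^{N}}{(N-s(k))!}$, whose closed form carries $(1+|\alpha_j|)^{L+\nu_j+1}$; the claimed shifted exponent $(1+|\alpha_j|)^{L+\nu_j+1-s(k)}$ does not fall out of that route, so the ``delicate bookkeeping step'' you flag is exactly where the argument breaks.

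Your integrality argument is also too loose: each application of $D-\alpha_j$ multiplies terms by $\alpha_j$, so naive recursion bookkeeping produces denominators $y_j^{n}$ with $n$ as large as $L+s(k)$, and it is no longer visible that $g_1^L$ suffices. The paper handles integrality and both bounds in one stroke through explicit closed forms rather than the recursion: it writes $B_{k,0}(t)=g_1^L\sum_{h}c_{h+s(k)}\frac{L!}{h!}t^h$ and $B_{k,j}(t)=-[B_{k,0}(t)e^{\alpha_jt}]_L$, so the coefficients are $b_{N,j}=-g_1^L\sum_{h+n=N}c_{h+s(k)}\frac{L!}{h!}\frac{\alpha_j^n}{n!}$ with $h+n=N\le L$, hence manifestly in $\ZI$ (the factors $\frac{L!}{h!\,n!}$ are integers and $g_1^L$ clears $y_j^{n}$ with $n\le L$); then $L_{k,j}(t)$ is read off as the tail $\sum_{N\ge L+\nu_j+1-s(k)}b_{N,j}t^N$ of the same series, and the bound $|b_{N,j}|\le g_1^L\frac{L!}{N!}\max|c_h|(1+|\alpha_j|)^N$ summed from $V=L+\nu_j+1-s(k)$ gives precisely \eqref{Upperboundrem}, including the factor $\frac{L!}{V!}(1+|\alpha_j|)^{V}e^{1+|\alpha_j|}$. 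To repair your proposal you would need to establish this truncation/tail description of $A_{s(k),j}$ and $R_{s(k),j}$ (the step the paper asserts via its formula for $B_{k,j}(t)$), rather than reduce the problem to plain differentiation of $R_{0,j}$.
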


\begin{proof} First we notice that
\[
B_{k,0}(t)=g_1^LD^kA_{0,0}(t)=
g_1^L\sum_{h=0}^{L-k} c_{h+k} \frac{L!}{h!} t^h\in\mathbb{Z}_{\mathbb{I}}[t]
\]
and
\[
B_{k,j}(t)=-[B_{k,0}(t)e^{\alpha_j t}]_L=\sum_{N=0}^{L} b_{N,j}t^N, \qquad j=1,...,m,
\]
where
\[
b_{N,j}=-g_1^L\sum_{N=h+n}c_{h+k}\frac{L!}{h!} \frac{\alpha_j^n}{n!}\in\mathbb{Z}_{\mathbb{I}}.
\]
Thus the $B_{k,j}\in \mathbb{Z}_{\mathbb{I}}$. We may now turn to the next claim. The simple estimate
\[
|B_{k,0}(t)|\le g_1^L L!\underset{0\le h\le L}\max\{|c_{h}|\} 
\sum_{h=0}^{L-k}  \frac{|t|^h}{h!}
\]
proves (\ref{Upperboundden}). 
For the remainders, we have $L_{k,j}(t)=\sum_{N=L+\nu_j+1-s(k)}^{\infty} b_{N,j}t^N$, where
\[
|b_{N,j}|=g_1^L\frac{L!}{N!}\left|\sum_{N=h+n}c_{h+k}{N\choose h} \alpha_j^n\right|\le
g_1^L\frac{L!}{N!}
\underset{0\le h\le L}\max\{|c_{h}|\}
(1+|\alpha_j|)^{N}.
\]
Set $V=L+\nu_j+1-s(k)$, then 
\[
|L_{k,j}(t)|\le g_1^L \frac{L!}{V!}
\underset{0\le h\le L}\max\{|c_{h}|\}
((1+|\alpha_j|)|t|)^{V}
\sum_{i=0}^{\infty}\frac{V!}{(V+i)!}(1+|\alpha_j|)^{i}|t|^i.\]
This completes the proof of the lemma. \end{proof}

Now we choose the parameters $\nu_j$ in an appropriate way. Let
\begin{equation}\label{CHOOSEPARAMETERS}
\nu_j =\left\lfloor l_j\left(1-\sqrt\frac{\log g_2}{\log L}\right)\right\rfloor,\quad g_2\le L.
\end{equation}
Then we have
\begin{equation}\label{LMmL}
L\left(1-\sqrt\frac{\log g_2}{\log L}\right)-m<
M \le L\left(1-\sqrt\frac{\log g_2}{\log L}\right).
\end{equation}
Furthermore, notice that from $M\leq L\left(1-\sqrt{\frac{\log g_2}{\log L}}\right)$, we get
\begin{equation}\label{fracMLM}
\frac{M}{L+1-M}<\frac{M}{L-M}\leq  \sqrt{\frac{\log L}{\log g_2}} -1.
\end{equation}
Using the inequality $M\leq L\left(1-\sqrt{\frac{\log g_2}{\log L}}\right)$, we obtain
\begin{equation}\label{fracM2LM}
\frac{M^2/2}{L+1-M}<\frac{M^2/2}{L-M}\leq \frac{\left(L\left(1-\sqrt{\frac{\log g_2}{\log L}}\right)\right)^2}{L-L\left(1-\sqrt{\frac{\log g_2}{\log L}}\right)}=\frac{1}{2}L\sqrt{\frac{\log L}{\log g_2}}-L+\frac{L}{2}\sqrt{\frac{\log g_2}{\log L}}.
\end{equation}

\begin{Lemma}\label{Estimates2}
Suppose
\begin{equation}\label{ASS860}
L\ge\max\{ e^{(\gamma\log\gamma)/2},2\log\frac{s_{\mathbb{I}}}{t_{\mathbb{I}}} \},
\quad \log\gamma=(3me_0)^2.
\end{equation}
Then we have
\begin{equation}\label{AeqN}
|B_{k,0}|\le e^{q(L)},
\end{equation}
\begin{equation}\label{Lerin}
|L_{k,j}|\le  e^{-r_j(\overline{l})},
\end{equation}
where
\begin{equation}\label{qN2}
q(L)=aL\log L+b_0L(\log L)^{1/2}+b_1L,
\end{equation}
\begin{equation}\label{rin2}
-r_j(\overline{l})=(dL-cl_j)\log L+e_0L(\log L)^{1/2}+e_1L,
\end{equation}
for all $j=1,...,m$ and $k=0,1,...,m$, and with
\[\begin{cases}
&b_0=\sqrt{\log g_2}+\frac{\log g_4}{2\sqrt{\log g_2}}\\
& e_0=3\sqrt{\log g_2}+\frac{\log{g_4}}{2\sqrt{\log g_2}}\\
&b_1=\max\{0,\log g_1-\log g_2-\log g_4\}\\
&e_1=\max\{0,\log g_1+2\log(1+g_3)+2\log 2-\log g_2-\log g_4+1\},
\end{cases}
\]
\end{Lemma}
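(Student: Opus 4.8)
\medskip
\noindent\textit{Proof plan.}\ The plan is to unwind the explicit upper bounds already at our disposal. Inequalities \eqref{Upperboundden}, \eqref{Upperboundrem} and \eqref{skleLM} bound $|B_{k,0}|$, $|L_{k,j}|$ and $s(k)$ in terms of $\max_h|c_h|$, while \eqref{chSiegel} bounds $\max_h|c_h|$ itself; so I would take logarithms of these, substitute the parameter inequalities \eqref{LMmL}, \eqref{fracMLM}, \eqref{fracM2LM} coming from the choice \eqref{CHOOSEPARAMETERS}, and then use the very generous lower bound \eqref{ASS860} on $L$ to absorb every lower-order term. The two halves of \eqref{ASS860} play distinct roles: $L\ge 2\log\frac{s_{\mathbb I}}{t_{\mathbb I}}$ tames the field constants $s_{\mathbb I},t_{\mathbb I}$ and the branch $2c\sqrt D$ of \eqref{chSiegel} (present only in the imaginary quadratic case: it gives $\log s_{\mathbb I}\le\frac{1}{2}L+O(1)$ and $\log(2c\sqrt D)=2\log s_{\mathbb I}+O(1)\le L+O(1)$), whereas $L\ge e^{(\gamma\log\gamma)/2}$ with $\log\gamma=(3me_0)^2$ forces $\log L$ to exceed any fixed polynomial in $e_0,\log g_1,\log g_4,m$ by an astronomical margin, which is exactly what makes the remaining $O(L/\sqrt{\log L})$, $O(m^2\log L)$ and $O(1)$ contributions negligible.

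For \eqref{AeqN} I would start from \eqref{Upperboundden},
\[
\log|B_{k,0}|\le 1+L\log g_1+\log(L!)+\log\max_h|c_h|,
\]
use $\log(L!)\le L\log L-L+\frac{1}{2}\log L+1$ (keeping the useful $-L$), discard the $2c\sqrt D$ branch of \eqref{chSiegel} as above, and for the Siegel branch estimate, via \eqref{fracMLM} and \eqref{fracM2LM},
\[
\frac{ML\log g_2+(M^2/2)\log g_4}{L+1-M}\le b_0L\sqrt{\log L}-L(\log g_2+\log g_4)+O\!\left(\frac{L}{\sqrt{\log L}}\right),
\]
where the coefficient $b_0=\sqrt{\log g_2}+\frac{\log g_4}{2\sqrt{\log g_2}}$ falls out exactly. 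Collecting terms, $L\log g_1-L(\log g_2+\log g_4)\le b_1L$ by the very definition of $b_1$, and the leftover $-L+O(L/\sqrt{\log L})+O(\log L)+O(1)$ is negative once \eqref{ASS860} holds; this gives $|B_{k,0}|\le e^{q(L)}$ with $q(L)=L\log L+b_0L\sqrt{\log L}+b_1L$, i.e.\ the coefficient of $L\log L$ is $a=1$.

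For \eqref{Lerin}, put $V=L+\nu_j+1-s(k)$ and start from \eqref{Upperboundrem},
\[
\log|L_{k,j}|\le L\log g_1+V\log(1+g_3)+(1+g_3)+\log\frac{L!}{V!}+\log\max_h|c_h|.
\]
The crucial observation is $\frac{L!}{V!}\le L^{\,L-V}$ for every sign of $L-V$ (a product of $|L-V|$ factors, each at most $L$, or at least $L$ in the denominator), so $\log\frac{L!}{V!}\le(L-V)\log L=(s(k)-\nu_j-1)\log L$; moreover $V\le L+l_j+1\le 2L$ since $l_j\le L-(m-1)$. Plugging in $s(k)\le L-M+m(m+1)/2$ from \eqref{skleLM} together with $M>L(1-\sqrt{\log g_2/\log L})-m$ and $\nu_j>l_j(1-\sqrt{\log g_2/\log L})-1$ from \eqref{LMmL} and \eqref{CHOOSEPARAMETERS}, one gets
\[
(L-V)\log L\le-l_j\log L+(L+l_j)\sqrt{\log g_2}\,\sqrt{\log L}+O(m^2\log L).
\]
Adding the Siegel contribution (again $b_0L\sqrt{\log L}-L(\log g_2+\log g_4)+O(L/\sqrt{\log L})$, plus the $2c\sqrt D$/$s_{\mathbb I}$ terms handled as before) and $V\log(1+g_3)\le 2L\log(1+g_3)$, the coefficient of $L\sqrt{\log L}$ becomes at most $2\sqrt{\log g_2}+b_0=3\sqrt{\log g_2}+\frac{\log g_4}{2\sqrt{\log g_2}}=e_0$, the coefficient of $L$ at most $\log g_1+2\log(1+g_3)-\log g_2-\log g_4\le e_1-2\log2-1$, the coefficient of $l_j\log L$ is $-1$ and that of $L\log L$ is $0$; the surviving $1+g_3+O(m^2\log L)+\log s_{\mathbb I}+\cdots$ fits inside the slack $(2\log2+1)L$ of $e_1$ (or inside the strictly negative $L$-coefficient when $e_1=0$) by \eqref{ASS860}. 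This is precisely $|L_{k,j}|\le e^{-r_j(\overline l)}$ with $-r_j(\overline l)=(dL-cl_j)\log L+e_0L\sqrt{\log L}+e_1L$, $c=1$, $d=0$.

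The delicate point is not any single estimate but the bookkeeping: arranging that the coefficient of $L\sqrt{\log L}$ lands on precisely $e_0$ in the remainder --- the ``$3$'' being the $1$ produced by Siegel's lemma through $b_0$ together with the $2$ coming from the crude $L+l_j\le 2L$ --- and on precisely $b_0$ in the denominator, while certifying that \emph{every} other term is dominated. That domination is feasible only because \eqref{ASS860} makes $L$, and hence $\sqrt{\log L}$ relative to $e_0,\log g_1,\log g_4,m$, enormous; the very choice of threshold $e^{(\gamma\log\gamma)/2}$ with $\log\gamma=(3me_0)^2$ is engineered so that the $-L$ of Stirling, the $O(L/\sqrt{\log L})$ from \eqref{fracM2LM}, the $O(m^2\log L)$ from the index count, and the field constants can all be absorbed simultaneously and uniformly in $\mathbb I$.
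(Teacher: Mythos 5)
Your proposal is correct in substance and follows the same architecture as the paper: take logarithms of \eqref{Upperboundden}--\eqref{Upperboundrem}, insert the Siegel bound \eqref{chSiegel}, substitute \eqref{fracMLM}--\eqref{fracM2LM} coming from the choice \eqref{CHOOSEPARAMETERS}, read off $b_0,b_1,e_0,e_1$, and let the huge threshold \eqref{ASS860} absorb everything else (including the $D$-dependent branch and the field constants $s_{\mathbb I},t_{\mathbb I}$, handled there exactly as you indicate). The one genuinely different step is your treatment of the factorial ratio: you use the elementary inequality $L!/V!\le L^{\,L-V}$ with $V=L+\nu_j+1-s(k)$, together with \eqref{skleLM}, \eqref{LMmL} and $l_j\le L-(m-1)$, whereas the paper writes $\frac{L!}{V!}\le\frac{s(k)!\,L!}{(L+\nu_j)!}\binom{L+\nu_j}{s(k)}$ and runs Stirling on $s(k)!$, $L!$, $(L+\nu_j)!$ separately (estimating $z(s(k))$ and $z(L)-z(L+\nu_j)$). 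Both routes deliver the same coefficient $e_0=3\sqrt{\log g_2}+\tfrac{\log g_4}{2\sqrt{\log g_2}}$, but they spend the slack in $e_1$ differently: the paper's binomial bound $\binom{L+\nu_j}{s(k)}\le 2^{L+\nu_j}$ is precisely what consumes the $2\log 2$ in $e_1$, while in your version that term never appears and the full $(2\log 2+1)L$ margin is available for the lower-order debris ($e^{1+g_3}$ with $1+g_3\le g_2\le L$, the $O(m^2\log L)$ index terms, $\log s_{\mathbb I}\le L/2+O(1)$, the $O(L/\sqrt{\log L})$ from \eqref{fracM2LM}), and the arithmetic indeed closes with room to spare; your route is arguably simpler. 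One detail to make explicit in a full write-up: for the $L_{k,j}$ estimate you must argue, as the paper does, that under \eqref{ASS860} the Siegel term in \eqref{chSiegel} dominates $2c\sqrt D$ (its exponent is of size $L\sqrt{\log L\log g_2}$ versus $\log(2c\sqrt D)\le L+O(1)$), rather than merely bounding the maximum by $e^{L+O(1)}$; in the latter reading you would lose the crucial $-L(\log g_2+\log g_4)$ and the $e_1$ budget would not close. Since you phrase it as discarding that branch, this is a presentational point, not a gap.
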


\begin{proof} 
By (\ref{gggggg}) and the definition of $e_0$ we have $e_0^2 \ge 9 \log 2$.
From the assumption (\ref{ASS860}) and the fact that $m\ge 2$ we now get
\begin{equation}\label{lallallaa}
\log L \ge \frac{1}{2} \gamma \log \gamma
= \frac{1}{2} e^{9m^2e_0^2}9m^2e_0^2>e^{229}.
\end{equation}
To simplify the notation, define
\begin{equation*}
p=\frac{m(m+3)}{2},
\end{equation*}
and notice that we have
\begin{equation}\label{coming6}
p \le \frac{5}{4}m^2 \le \log \log L.
\end{equation}

Recall Stirling's formula (see e.g. \cite{stirling})
\[
n! = \sqrt{2\pi}n^{n+\frac{1}{2}}e^{-n+\frac{\theta(n)}{12n}}, \quad  0<\theta(n)<1,
\]
which we use in the form
\[
\log n!=\log \sqrt{2\pi}+z(n)+\frac{\theta(n)}{12n},
\]
where we denoted $z(n)= (n+1/2)\log n-n$.
We are now ready to estimate $B_{k,0}$.
From (\ref{Upperboundden})  and  (\ref{chSiegel}), we get
\begin{equation}\label{matters}
|B_{k,0}| \le eg_1^L L! 
\max\left\{2c\sqrt{D},
s_{\mathbb{I}} t_{\mathbb{I}}^{\frac{M}{L+1-M}} \left(g_2^{ML}g_4^{M^2/2}\right)^{\frac{1}{L+1-M}}\right\}.
\end{equation}
Use now the estimates (\ref{fracMLM})-(\ref{fracM2LM}) to obtain
\[
|B_{k,0}| \le eg_1^L L! 
\max\left\{2c\sqrt{D},
s_{\mathbb{I}} t_{\mathbb{I}}^{\sqrt{\frac{\log L}{\log g_2}}-1} g_2^{L\sqrt{\frac{\log L}{\log g_2}}-L}g_4^{\frac{L}{2}\sqrt{\frac{\log L}{\log g_2}}-L+\frac{L}{2}\sqrt{\frac{\log g_2}{\log L}}}\right\}\]
Next we note that by (\ref{ASS860}), we have $\sqrt{D}\le \frac{\pi\cdot 25^2}{2\cdot 4^2}e^L\leq 2.5e^L$. Since $L\geq e^{\gamma\log \gamma}$ and $\log \gamma=(3me_0)^2\geq 81m^2\log g_2$, it is clear that the second term dominates, and we can conclude that
\begin{multline*}
\log|B_{k,0}| \le 1+L\log g_1+\log(\sqrt{2\pi})+ \left(L+\frac{1}{2}\right)\log L-L+\frac{1}{12L}
+\log s_{\mathbb{I}}\\+\left(\sqrt{\frac{\log L}{\log g_2}}-1\right)\log t_{\mathbb{I}}
+\left(L\sqrt{\frac{\log L}{\log g_2}}-L\right)\log g_2 +\left(\frac{1}{2}L\sqrt{\frac{\log L}{\log g_2}}-L+\frac{1}{2}L\sqrt{\frac{\log g_2}{\log L}}\right)\log g_4
\end{multline*}
Recalling that $\log t_{\mathbb{I}}\leq 2$, $g_2\geq 2$, and noticing that (\ref{lallallaa}) implies that $\frac{\log g_4}{2}\sqrt{\frac{\log g_2}{\log L}} \le \frac{1}{24}$, we may further estimate this expression by
\begin{multline*}
\le L\log L+\left(\sqrt{\log g_2}+\frac{\log g_4}{2\sqrt{\log g_2}}\right)L\sqrt{\log L} +\left(\log g_1-\log g_2-\log g_4\right)L\\+
\left(-1+\frac{1}{24}\right)L+\frac{1}{2}\log L+\frac{2}{ \sqrt{\log 2} }\sqrt{\log L}+
1+ \log \sqrt{2 \pi}+\frac{1}{12L}+\log s_{\mathbb{I}}-\log t_{\mathbb{I}},
\end{multline*}
We will now show that the expression on the second line $\leq 0$. First, by (\ref{ASS860}), we have,
\[
-\frac{1}{2}L + \log s_{\mathbb{I}}-\log t_{\mathbb{I}} \le 0.
\]
Secondly, for large enough $L$ (recall that we have $L\geq \exp(e^{229})$, which is large enough), we have
\[
-\frac{11}{24}L+\frac{1}{2}\log L + \frac{2}{\sqrt{\log 2}}\sqrt{\log L}+1+\log{\sqrt{2\pi}}+\frac{1}{12L} \le 0,
\]
and hence, we may conclude
\[
\log|B_{k,0}|\le L\log L+\left(\sqrt{\log g_2}+\frac{\log g_4}{2\sqrt{\log g_2}}\right)L\sqrt{\log L} +\left(\log g_1-\log g_2-\log g_4\right)L.
\]
This proves (\ref{qN2}), i. e., we have now derived that we can choose
\[
b_0=\sqrt{\log g_2}+\frac{\log g_4}{2\sqrt{\log g_2}}\quad \textrm{and}\quad b_1=\max\{0,\log g_1-\log g_2-\log g_4\}.
\]
The reason why we set $b_1$ to be the maximum of $0$ and $\log g_1-\log g_2-\log g_4$ will be later motivated.

We may now turn to bounding $L_{k,j}$. Using the bound (\ref{Upperboundrem}) for $L_{k,j}$ and bound (\ref{chSiegel}) for $c_h$, together with the estimate
\[
\frac{L!}{(L+\nu_j+1-s(k))!} \le \frac{s(k)!L!}{(L+\nu_j)!}\binom{L+\nu_j}{s(k)}
\]
and Stirling's formula, we get
\begin{multline*}
\log |L_{k,j}| \le L\log g_1+(L+\nu_j+1-s(k))\log(1+|\alpha_j|)+1+|\alpha_j|
+ \log(\sqrt{2\pi})+z(s(k))+\frac{1}{12s(k)}\\+z(L)+\frac{1}{12L}-z(L+\nu_j)+(L+\nu_j)\log 2+\log s_{\mathbb{I}}+\left(\sqrt{\frac{\log L}{\log g_2}}-1\right)\log t_{\mathbb{I}}
\\+\left(L\sqrt{\frac{\log L}{\log g_2}}-L\right)\log g_2 +\left(\frac{1}{2}L\sqrt{\frac{\log L}{\log g_2}}-L+\frac{1}{2}L\sqrt{\frac{\log g_2}{\log L}}\right)\log g_4,
\end{multline*}
where $z(t)=t\log t-t+\frac{1}{2}\log t$. We wish to estimate this expression for $\log |L_{k,j}|$. Start with the function $z(t)$. For $t>1/2$ the function $z(t)$ has a positive increasing derivative 
$z'(t)=\log t+1/(2t)$. Thus,
\begin{multline*}
z(L)-z(L+\nu_j)\le -\nu_j\left(\log L+\frac{1}{2L}\right)
\le -\left(l_j\left(1-{\sqrt{\frac{\log g_2}{\log L}}}\right)-1\right)\log L\\\le -l_j\log L+
\sqrt{\log g_2}L\sqrt{\log L}+\log L.
\end{multline*}
By (\ref{skleLM}), we have
\[
s(k)\le L-M+\frac{m(m+1)}{2} \le \sqrt{\log g_2}\frac{L}{\sqrt{\log L}}+\frac{m(m+3)}{2}:=K+p.
\]
Since the function $\log t$ has a positive decreasing derivative $1/t$ when $t>0$, we get
\begin{multline*}
z(s(k))\le z(K+p)\le z(K)+p\left(\log(K+p)+\frac{1}{2(K+p)}\right)\\\le z(K)+p \log K+\frac{p^2}{K}+\frac{p}{2(K+p)}
\le K\log K -K+\frac{1}{2}\log K+ (\log K)^2+\frac{(\log K)^2}{K}+\frac{\log K}{2K+10},
\end{multline*}
where we used the estimates $5\le p \le \log K$, implied by (\ref{coming6}).
Since $L \ge g_2$ we have $K\log K \le \sqrt{\log g_2}L\sqrt{\log L}$, and 
\[
-K + \frac{1}{2}\log K + (\log K)^2+ \frac{(\log K)^2}{K}+\frac{\log K}{2K+10} \le 0
\]
whenever $K \ge 1$, which is true whenever $L \ge 2$ because $K \ge \sqrt{\log 2/\log L}L$. Thus we have
\[
z(s(k)) \le \sqrt{\log g_2} L \sqrt{\log L}.
\]

By combining the above estimates while bounding $\nu_j \le L$, $s(k) \ge 1$ and  $|\alpha_j| \le g_3$, we get
\begin{multline*}
\log |L_{k,j}|  \le -l_j\log L+\left(3\sqrt{\log g_2}+\frac{\log{g_4}}{2\sqrt{\log g_2}}\right)L\sqrt{\log L}\\
+\left(\log g_1+2\log(1+g_3)+2\log 2-\log g_2-\log g_4+\frac{\log g_4}{2}\sqrt{\frac{\log g_2}{\log L}}\right)L\\
+\frac{13}{12}+g_3+\log(\sqrt{2\pi})+\frac{1}{12L}+\log t_{\mathbb{I}}\sqrt{\frac{\log L}{\log g_2}}+\log L
+\log s_{\mathbb{I}}-\log t_{\mathbb{I}}.
\end{multline*}
As earlier, we deduce
\begin{multline*}
\log |L_{k,j}|  \le -l_j\log L+\left(3\sqrt{\log g_2}+\frac{\log{g_4}}{2\sqrt{\log g_2}}\right)L\sqrt{\log L}\\+\left(\log g_1+2\log(1+g_3)+2\log 2-\log g_2-\log g_4
+1\right)L\\\left(-1+\frac{1}{24}\right)L+\frac{1}{12}+2\log L+\log(\sqrt{2\pi})+\frac{1}{12L}+
\frac{2}{\sqrt{\log 3}}\sqrt{\log L}+\log s_{\mathbb{I}}-\log t_{\mathbb{I}}.
\end{multline*}
We will now prove that the last line is at most zero, by noting that
\[
-\frac{11}{24}L+\frac{1}{12}+2\log L+\log(\sqrt{2\pi})+\frac{1}{12L}+
\frac{2}{\sqrt{\log 2}}\sqrt{\log L} \le 0
\]
which holds when $L$ is large enough, and again we have 
$-\frac{1}{2}L + \log s_{\mathbb{I}}-\log t_{\mathbb{I}} \le 0$, too.
Hence
\begin{multline*}
\log |L_{k,j}|  \le -l_j\log L+\left(3\sqrt{\log g_2}+\frac{\log{g_4}}{2\sqrt{\log g_2}}\right)L\sqrt{\log L}\\
+\left(\log g_1+2\log(1+g_3)+2\log 2-\log g_2-\log g_4+1\right)L
\end{multline*}
and we may choose
\begin{alignat*}{1}
e_0&=3\sqrt{\log g_2}+\frac{\log{g_4}}{2\sqrt{\log g_2}}\\
e_1&=\max\{0,\log g_1+2\log(1+g_3)+2\log 2-\log g_2-\log g_4+1\},
\end{alignat*}
where the choice of $e_1$ will be later motivated. This completes the proof of Lemma \ref{Estimates2}.\end{proof}

\begin{Lemma}\label{Estimates3}
We have
\[
\log\gamma=(3me_0)^2\ge \log S_2
\]

where $S_2$ denotes the largest solution of the equation
\begin{equation}\label{gammaehto}
S\log S= 2(e_0mS(\log S)^{1/2}+ e_1mS+e_0 m^2(\log S)^{1/2}+2e_0m^2+e_1m^2).
\end{equation}
\end{Lemma}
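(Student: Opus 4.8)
\textbf{Proof proposal for Lemma \ref{Estimates3}.}

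The plan is to show that $S=\gamma$ with $\log\gamma=(3me_0)^2=9m^2e_0^2$ already overshoots the defining equation \eqref{gammaehto}, so that it lies to the right of the largest root $S_2$. Concretely, set
\[
F(S)=S\log S-2\bigl(e_0 m S(\log S)^{1/2}+e_1 m S+e_0 m^2(\log S)^{1/2}+2e_0m^2+e_1m^2\bigr),
\]
so $S_2$ is the largest zero of $F$. Since $F(S)\to+\infty$ as $S\to\infty$ (the $S\log S$ term dominates everything on the right, which is $O(S(\log S)^{1/2})$), it suffices to check $F(\gamma)\ge 0$: then $\gamma$ cannot lie strictly between two consecutive roots with $F<0$ to its right, hence $\gamma\ge S_2$, i.e. $\log\gamma\ge\log S_2$ as claimed.

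The main step is therefore the inequality $F(\gamma)\ge 0$, i.e.
\[
\gamma\log\gamma\ge 2e_0 m\,\gamma(\log\gamma)^{1/2}+2e_1 m\,\gamma+2e_0 m^2(\log\gamma)^{1/2}+4e_0m^2+2e_1m^2.
\]
I would divide through by $\gamma$ (note $\gamma\ge e$ so $\gamma\log\gamma\ge\gamma$ and the terms not multiplied by $\gamma$ become negligible after division, using $\gamma=\exp(9m^2e_0^2)$ which is astronomically large). With $\log\gamma=9m^2e_0^2$, hence $(\log\gamma)^{1/2}=3me_0$, the dominant requirement reduces to
\[
9m^2e_0^2 \;\ge\; 2e_0 m\cdot 3me_0 + 2e_1 m + \text{(tiny terms)} \;=\; 6m^2e_0^2 + 2e_1 m + \text{(tiny)},
\]
i.e. essentially $3m^2e_0^2\ge 2e_1m+\text{(tiny)}$. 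This is comfortable: from \eqref{gggggg} and the definitions, $e_0\ge 3\sqrt{\log g_2}\ge 3\sqrt{\log 2}$ so $e_0^2\ge 9\log 2$, while $e_1\le \log g_1+2\log(1+g_3)+2\log 2+1$, and by \eqref{gggggg} again $e_0^2\ge 9\log g_2\ge 9\log(1+g_3)$ and $e_0^2 \ge 9\log g_2 \ge 9(\log g_1)/m$ (since $g_1\le g_2^m$), so $m^2e_0^2$ dwarfs $e_1 m$ for $m\ge 2$; the residual constant and the $(\log\gamma)^{1/2}$-only terms are absorbed by the enormous size of $\gamma$.

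The only real obstacle is bookkeeping: one must keep track of the relations between $e_0$, $e_1$, $g_1$, $g_2$, $g_3$, $g_4$ given in \eqref{gggggg} and the displayed definitions of $b_0,e_0,b_1,e_1$, and check that the crude bound $e_0^2\ge 9\log g_2$ together with $g_1\le g_2^m$, $1+g_3\le g_2$ is enough to dominate $e_1$ up to the slack provided by the factor $\exp(9m^2e_0^2)$. No delicate estimate is needed — once $F(\gamma)\ge 0$ is in hand the monotonicity-at-infinity argument finishes the proof immediately.
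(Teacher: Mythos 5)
Your plan has a genuine logical gap at the decisive step. From $F(\gamma)\ge 0$ together with $F(S)\to+\infty$ you \emph{cannot} conclude $\gamma\ge S_2$: a function can be positive at $\gamma$, dip negative somewhere past $\gamma$, and still tend to $+\infty$, in which case $S_2>\gamma$ (compare $F(S)=(S-1)(S-2)(S-3)$ evaluated at $\gamma=1.5$). What is actually required is $F(S)\ge 0$ for \emph{every} $S\ge\gamma$, and that does not follow from the value at a single point together with the limit at infinity. Your closing remark that ``the monotonicity-at-infinity argument finishes the proof immediately'' invokes precisely the missing ingredient without supplying it: you have not established that $F$ is increasing (or even nonnegative) on $[\gamma,\infty)$, and the $-S(\log S)^{1/2}$ and $-S$ pieces make this something that must be checked rather than asserted.

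The paper sidesteps the issue with a small but essential reformulation. Dividing \eqref{gammaehto} by $S\log S$, a root $S_2$ of $F$ is a solution of $f(S_2)=1$, where
\begin{equation*}
f(S)=2\left(\frac{e_0m}{\sqrt{\log S}}+\frac{e_1m}{\log S}+\frac{e_0m^2}{S\sqrt{\log S}}+\frac{2e_0m^2+e_1m^2}{S\log S}\right),
\end{equation*}
and every summand of $f$ is visibly decreasing in $S$. So it suffices to show $f(S)<1$ for all $S>\gamma$, which follows immediately from the two inequalities $\log S>\log\gamma=(3me_0)^2$ and $S>\log S$, once one has the auxiliary bound $25me_1\le(3me_0)^2$. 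That bound is essentially your observation that $3m^2e_0^2$ dominates $2e_1m$; the paper makes it precise via $e_1\le 3.222\,m\log g_2$ and $e_0^2\ge 9\log g_2$, giving $25me_1\le 80.55\,m^2\log g_2\le 81m^2\log g_2\le (3me_0)^2$. Your numerical estimates are in the right spirit and would close the argument, but only after replacing ``check $F(\gamma)\ge 0$'' by ``bound $f$ uniformly on $(\gamma,\infty)$'' (or, equivalently, establish that $F$ does not return to zero past $\gamma$).
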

\begin{proof}
First we shall prove
\begin{equation}\label{12me1}
25me_1\le (3me_0)^2.
\end{equation}
Starting from the definition and using (\ref{gggggg}) we get
\begin{multline*}
e_1=\max\{0,\log g_1+2\log(1+g_3)+2\log 2+1-\log g_2-\log g_4\}\\ 
\le \log g_1+\log(1+g_3)+2\log 2+1 \le (m+3+\frac{1}{\log 2})\log g_2 \le 3.222m \log g_2,
\end{multline*}
which is a very rough estimate but sufficient for our purposes here. Now, to complete the proof of this claim, we only need to notice that $3\sqrt{\log g_2}\le e_0$, and hence
\[
\left(3me_0\right)^2\geq 9\cdot m^2\cdot 0\log g_2=81m^2\log g_2
\]
while
\[
25me_1\leq 25m\cdot 3.222m \log g_2=80.55m^2\log g_2.
\]
Now we may turn to study the function
\[
f(S):=2\left( \frac{e_0m}{\sqrt{\log S}} + \frac{e_1m}{\log S} + \frac{e_0m^2}{S \sqrt{\log S}} + \frac{2e_0m^2+e_1m^2}{S \log S} \right).
\]
We have $f(S_2)=1$ and $f(S)<1$ for $S > S_2$. We show that $\gamma \ge S_2$ by showing that whenever $S > \gamma$ we have $f(S)<1$. Suppose $S > \gamma$. Notice first that

\[\log S>\log \gamma\geq (3me_0)^2\geq \left(3\cdot 2\cdot 3\sqrt{\log 2}\right)^2>224,\]
and hence, $S>e^{224}$. Using the estimates
\[
e_0\ge 3\sqrt{\log g_2}\geq 1, \ e_1m\le \frac{1}{25}(3me_0)^2,\ S> \log S >(3me_0)^2>37m,
\]
we get

\[
\frac{e_0m}{\sqrt{\log S}} + \frac{e_1m}{\log S} + \frac{e_0m^2}{S \sqrt{\log S}} + \frac{2e_0m^2+e_1m^2}{S \log S}<\frac{1}{3}+\frac{1}{25}+\frac{1}{37}+\frac{2}{3}\cdot\frac{1}{37}+\frac{1}{25}\cdot \frac{1}{37}\frac{1}{2},
\]
which proves the lemma.
\end{proof}

\subsection{An axiomatic Baker type theorem}

The proof of Theorem \ref{BAKERTYPE} will be based on the following axiomatic approach given in \cite{TM}.

First we need some notations and assumptions.
Let $\Theta_1,...,\Theta_m\in\mathbb{C}^*$ be given. Set  $\overline{l}=(l_1,...,l_m)^T,\quad L=L(\overline{l})=l_1+...+l_m$ When $L\ge L_0$, assume that we have a sequence of simultaneous linear forms
\begin{equation}\label{ASSUMPTION1}
L_{k,j}=L_{k,j}(\overline{l})=B_{k,0}(\overline{l})\Theta_j+B_{k,j}(\overline{l}),
\end{equation}
for $k=0,1,...m, j=1,...,m,$
where $B_{k,j}=B_{k,j}(\overline{l})\in\mathbb{Z}_{\mathbb{I}}$ with $k,j=0,1,...m$ satisfy the determinant condition
\begin{equation*}
\Delta=
\left|\begin{matrix}
& B_{0,0}  &B_{0,1} &   ... & B_{0,m}\\
& B_{1,0}  &B_{1,1} &   ... & B_{1,m}\\
&          & ...    &       &        \\
& B_{m,0}  &B_{m,1} &   ... & B_{m,m}\\
\end{matrix} \right|
\ne 0
\end{equation*}
Furthermore, let $a,b,c,d,b_i,e_i\in\mathbb{R}_{\ge 0}$,  $a,c,c-dm>0$ and suppose that $|B_{k,0}(\overline{l})|\le e^{q(L)}$ and $|L_{k,j}(\overline{l})|\le  e^{-r_j(\overline{l})}$,
where
\begin{alignat*}{1}
&q(L)=aL\log L+b_0L(\log L)^{1/2}+b_1L+b_2\log L+b_3,\\
&-r_j(\overline{l})=(dL-cl_j)\log L+e_0L(\log L)^{1/2}+e_1L+e_2\log L+e_3,
\end{alignat*}
for all $k,j=0,1,...,m$.

Set
\begin{alignat*}{1}
&f=\frac{2}{c-dm},\quad A=b_0+\frac{ae_0m}{c-dm},\quad B=a+b_0+b_1+\frac{ae_1m}{c-dm},\\
&C=am+b_2+\frac{a(dm^2+e_2m)}{c-dm},\quad D=b_0m+\frac{ae_0m^2}{c-dm}\\
&E=(a+b_0+b_1)m+b_2+b_3+\frac{a((2e_0+e_1)m^2+(e_2+e_3)m)}{c-dm}.
\end{alignat*}
and define
\begin{equation*}
\xi(z,H):=
A\left(f\frac{z(f \log H)}{\log H}\right)^{1/2}+
B\frac{ z(f \log H)}{\log H}+
C\frac{\log z(f \log H)}{ \log H}
+D\frac{(\log z(f \log H))^{1/2}}{\log H}.
\end{equation*}
Write
\[
H_0=\max\{ m, L_0,e^{(\gamma\log \gamma)/f},e^{e/f}\}, \quad \gamma=\max\{S_2,1\},
\]
where $S_2$ is the largest solution of the equation
$$S\log S= f(e_0mS(\log S)^{1/2}+ e_1mS+(dm^2+e_2m)\log S     
$$
\begin{equation}\label{ASSUMPTION12}
+e_0 m^2(\log S)^{1/2}+2e_0m^2+e_1m^2+e_2m+e_3m).
\end{equation}

\begin{Theorem}\label{BAKERAXIOM2}
With the assumptions just stated, we have
\begin{equation*}
\left|\beta_0+\beta_1\Theta_1+...+\beta_m\Theta_m\right|\ge 
\frac{1}{2e^{E}}
H^{-\frac{a}{c-dm}-\epsilon(H)}.
\end{equation*}
for all 
$$\overline{\beta}=(\beta_0,\beta_1,...,\beta_m)^T \in 
\mathbb{Z}_{\mathbb{I}}^{m+1}\setminus\{\overline{0}\}$$
and
\begin{equation*}
H=\prod_{i=1}^{m}(2mH_i)\ge H_0,\quad  H_i\ge h_i=\max\{1,|\beta_i|\} 
\end{equation*}
with an error term
\begin{equation*}
\epsilon(H)=\xi(z,H).
\end{equation*}
\end{Theorem}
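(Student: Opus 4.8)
The plan is to derive a lower bound for the linear form $\Lambda=\beta_0+\beta_1\Theta_1+\dots+\beta_m\Theta_m$ from the system of simultaneous approximations $L_{k,j}=B_{k,0}\Theta_j+B_{k,j}$ by the standard ``determinant trick''. First I would fix $\overline{\beta}\neq\overline{0}$, assume (without loss of generality, since otherwise there is nothing to prove) that $|\Lambda|$ is small, and form the $(m+1)\times(m+1)$ matrix obtained by replacing the zeroth column of $(B_{k,j})_{k,j}$ by the vector $(\Lambda)_k=(\beta_0+\sum_j\beta_jL_{k,j}/B_{k,0})$-type combination; more precisely, using $\Theta_j=(L_{k,j}-B_{k,j})/B_{k,0}$ one rewrites $\Lambda B_{k,0}=\beta_0B_{k,0}+\sum_{j=1}^m\beta_j B_{k,j}+\sum_{j=1}^m\beta_j L_{k,j}$. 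Taking the determinant of the matrix with rows indexed by $k=0,\dots,m$ whose $j$-th entry is $B_{k,j}$ for $j\geq 1$ and whose zeroth entry is $\beta_0B_{k,0}+\sum_{j\geq1}\beta_jB_{k,j}$, multilinearity together with the determinant condition $\Delta\neq 0$ shows that this determinant equals $\beta_0\Delta\neq 0$ if $\beta_0\neq0$; the general case is handled by choosing the column to replace according to which $\beta_i\neq 0$. Hence this determinant is a nonzero element of $\mathbb{Z}_{\mathbb{I}}$, so its absolute value is $\geq 1$ (here one uses that $\mathbb{Z}_{\mathbb{I}}$ is the ring of integers of $\mathbb{Q}$ or an imaginary quadratic field, where nonzero elements have absolute value $\geq 1$).

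Next I would expand that same determinant along the modified column after substituting $\beta_0B_{k,0}+\sum\beta_jB_{k,j}=\Lambda B_{k,0}-\sum_{j=1}^m\beta_jL_{k,j}$. This expresses $1\leq|\text{(nonzero integer)}|\leq |\Lambda|\cdot|\det(\text{matrix with column }B_{k,0})|+\sum_{j}|\beta_j|\cdot|\det(\text{matrix with column }L_{k,j})|$. Each determinant is bounded by $(m+1)!$ times the product of the relevant entry bounds along one column and the $B_{k,j}$-bounds along the other columns; but it is cleaner to bound each $(m+1)\times(m+1)$ determinant by Hadamard's inequality or simply by $(m+1)!$ times a product of row maxima, using $|B_{k,0}|\le e^{q(L)}$ and $|L_{k,j}|\le e^{-r_j(\overline{l})}$, and for the columns $B_{k,j}$ ($j\geq1$) the crude bound $|B_{k,j}|\le |B_{k,0}|(1+|\Theta_j|)^{\text{something}}$ — though in the present axiomatic setting one simply treats $|B_{k,j}|$ via the trivial estimate coming from $|B_{k,0}\Theta_j+B_{k,j}|=|L_{k,j}|$ being tiny, so $|B_{k,j}|\le |B_{k,0}||\Theta_j|+1\le$ a constant multiple of $e^{q(L)}$. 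Collecting terms yields an inequality of the shape $1\le c(m)\,\bigl(|\Lambda|\,e^{(m+1)q(L)}+H\,e^{mq(L)}e^{-\min_j r_j(\overline{l})}\bigr)$ where $H=\prod(2mH_i)$ bounds $(m+1)\max_i|\beta_i|\cdot 2^{\text{stuff}}$ appropriately; the second term must be made $\le 1/2$ by the choice of $\overline{l}$.

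The heart of the argument, and the main obstacle, is the optimization of the free parameters $\overline{l}=(l_1,\dots,l_m)$ — equivalently $L$ and the splitting — so that the ``error term'' $He^{mq(L)-\min_j r_j}$ is $\le1/2$ while $e^{(m+1)q(L)}$ is as small as possible relative to $|\Lambda|^{-1}$. Writing $r_j(\overline{l})=(cl_j-dL)\log L-e_0L\sqrt{\log L}-e_1L-\dots$, one wants to choose $l_j$ roughly proportional so that $cl_j-dL\approx \frac{1}{m}(\text{something})$; the right choice makes $\min_j(cl_j-dL)\approx (c-dm)L/m$ up to rounding, so that the constraint becomes, after taking logarithms, $\log H+mq(L)\le (c-dm)\frac{L}{m}\log L-(\text{lower order})$, i.e. $L\log L\gtrsim \frac{m}{c-dm}\log H$ plus correction terms in $L\sqrt{\log L}$ and $L$. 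One then inverts this using the function $z(\cdot)$, the inverse of $y\mapsto y\log y$: the minimal admissible $L$ is essentially $L=z(f\log H)$ with $f=2/(c-dm)$ (the factor accounting for doubling/rounding slack), valid once $H\ge H_0$ where $H_0$ is calibrated precisely by Lemma \ref{Estimates3}-type bound $\gamma\ge S_2$ guaranteeing the self-consistency of the implicit inequality \eqref{ASSUMPTION12}. Substituting $L=z(f\log H)$ into $\frac{(m+1)q(L)+\log(2c(m))}{\log H}$ and carefully grouping the $\log L$, $\sqrt{\log L}$, $L$, $L\sqrt{\log L}$ contributions produces exactly the four terms $A(fz(f\log H)/\log H)^{1/2}+Bz(f\log H)/\log H+C\log z(f\log H)/\log H+D(\log z(f\log H))^{1/2}/\log H=\xi(z,H)=\epsilon(H)$, together with the main exponent $a/(c-dm)$ and the constant $2e^E$. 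I expect the bookkeeping of which lower-order terms land in $E$ versus $\epsilon(H)$, and verifying that all the side conditions ($a,c-dm>0$, $L\ge g_2$, $L\ge$ the various thresholds, $p\le\log\log L$, etc.) hold for $H\ge H_0$, to be the genuinely delicate part; the determinant trick itself is routine.
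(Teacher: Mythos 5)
The paper itself gives no proof of Theorem \ref{BAKERAXIOM2}: it is quoted as the axiomatic result of \cite{TM} and only applied. Your proposal tries to reprove it, but the central step fails. The full determinant expansion gives, as you state, an inequality of the shape $1\le c(m)\bigl(|\Lambda|\,e^{(m+1)q(L)}+H\,e^{mq(L)}e^{-\min_j r_j(\overline{l})}\bigr)$ with $\Lambda=\beta_0+\beta_1\Theta_1+\cdots+\beta_m\Theta_m$, because each cofactor costs $m$ further copies of $e^{q(L)}$ (plus factors $1+|\Theta_j|$ from your surrogate bound for $|B_{k,j}|$, $j\ge1$, which are not even covered by the axioms or by the constant $2e^{E}$). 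From this the best possible conclusion is $|\Lambda|\gg e^{-(m+1)q(L)}$, i.e.\ a main exponent $\frac{(m+1)a}{c-dm}$, not the claimed $\frac{a}{c-dm}$; your final paragraph asserts the exponent $\frac{a}{c-dm}$ comes out of the bookkeeping, but that does not follow from your own inequality. Worse, the side condition you need, $\log H+mq(L)\le \min_j r_j(\overline{l})$, is unsatisfiable in the intended application: there $a=c=1$, $d=0$, so $mq(L)\ge m\,L\log L\ge 2L\log L$, while $\min_j r_j\le c(\min_j l_j)\log L\le \frac{1}{m}L\log L\le\frac12 L\log L$; no choice of $\overline{l}$ and $L$ makes the second term small. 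So the determinant trick, as set up, cannot deliver the theorem.

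The determinant condition must instead be used only to select one good linear form. Put $T_k:=\beta_0B_{k,0}-\sum_{j=1}^m\beta_jB_{k,j}\in\mathbb{Z}_{\mathbb{I}}$; since $(T_0,\ldots,T_m)^T=(B_{k,j})(\beta_0,-\beta_1,\ldots,-\beta_m)^T$ and the matrix is nonsingular while $\overline{\beta}\ne\overline{0}$, some $T_k\ne 0$, hence $|T_k|\ge1$. From $L_{k,j}=B_{k,0}\Theta_j+B_{k,j}$ one gets $B_{k,0}\Lambda=T_k+\sum_{j=1}^m\beta_jL_{k,j}$, so $|\Lambda|\ge\bigl(1-\sum_j|\beta_j||L_{k,j}|\bigr)/|B_{k,0}|$. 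Now the parameters are tuned individually (not balanced): choose $l_j$ so that $c\,l_j\log L\gtrsim \log(2mH_j)+dL\log L+e_0L(\log L)^{1/2}+e_1L+\cdots$, which forces $H_j|L_{k,j}|\le\frac{1}{4m}$, say, and summing over $j$ gives $(c-dm)L\log L\lesssim\log H$ up to lower-order terms; the calibration of $H_0$ via $S_2$ in (\ref{ASSUMPTION12}) absorbs those terms into the factor $f=\frac{2}{c-dm}$, so the minimal admissible $L$ satisfies $L\le z(f\log H)$. Then $|\Lambda|\ge\frac12e^{-q(L)}$ with $q(L)$ occurring exactly once, and expanding $q(L)=aL\log L+b_0L(\log L)^{1/2}+\cdots$ together with $L\log L\le\frac{1}{c-dm}\bigl(\log H+e_0mL(\log L)^{1/2}+e_1mL+\cdots\bigr)$ yields the exponent $\frac{a}{c-dm}+\epsilon(H)$ and the constant $2e^{E}$; the shapes $A=b_0+\frac{ae_0m}{c-dm}$, $B=a+b_0+b_1+\frac{ae_1m}{c-dm}$, etc., with a single $b_0$ rather than $(m+1)b_0$, confirm this single-$q$ structure. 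Your optimization and inversion via $z$ are essentially right and can be kept once the determinant expansion is replaced by this row-selection step.
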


\begin{Lemma} \label{NESTEDLOG}
\cite{HANCLETAL} The inverse function $z(y)$ of the function
\begin{equation}\label{zzy}
y(z)= z \log z, \quad  z \geq \frac{1}{e},
\end{equation}
is strictly increasing. 
Define $z_0(y)=y$ and $z_n(y)=\frac{y}{\log z_{n-1}}$ for $n\in\mathbb Z^+$. 
Suppose $y>e$, then we have 
\begin{equation}\label{z1z3zz2z0}
z_1<z_3<\cdots <z<\cdots <z_2<z_0. 
\end{equation}
Thus the inverse function may be given by the infinite nested logarithm fraction
\begin{equation}\label{logloglog} 
z(y) =z_{\infty}= \frac{y}{\log \frac{y}{\log \frac{y}{\log ...}}},\quad y>e. 
\end{equation}
In particular,
\begin{equation}\label{loglog} 
z(y)< z_2(y) = \frac{y}{\log \frac{y}{\log y}},\quad y>e. 
\end{equation}
\end{Lemma}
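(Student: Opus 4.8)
The plan is to realise $z=z(y)$ as the unique fixed point of the order-reversing map $T(w)=y/\log w$ and then run a two-sided monotone-iteration argument on the sequence $(z_n)$. First, monotonicity: since $y'(z)=\log z+1>0$ for $z>1/e$, the function $y(z)=z\log z$ in (\ref{zzy}) is strictly increasing on $[1/e,\infty)$, hence has a strictly increasing inverse $z(y)$, defined for $y\ge-1/e$. Because $e\log e=e$ we have $z(e)=e$, so $z(y)>e$ whenever $y>e$; dividing the defining identity $z\log z=y$ by $\log z>1$ gives $z=y/\log z=T(z)$, so $z$ is a fixed point of $T$. Note also that $T$ is continuous and strictly decreasing on $(1,\infty)$, since $T'(w)=-y/(w(\log w)^2)<0$ there, and that $w\mapsto w/\log w$ is strictly increasing on $(e,\infty)$ and equals $e$ at $w=e$, so that $z_1=y/\log y>e$ for $y>e$; in particular every logarithm appearing below is positive.

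Next I would establish the alternating chain. Fix $y>e$. We have $z_0=y>z$ because $z=y/\log z<y$ (as $\log z>1$). Applying the order-reversing $T$ to $z_0>z$ gives $z_1=T(z_0)<z$; applying it again gives $z_2=T(z_1)>z$, while $z_2=T^2(z_0)<z_0$ since $\log(y/\log y)>1$. One then proves by induction on $n\ge1$ that every $z_n$ lies in $(e,y]$ and that
\[
z_1<z_3<\cdots<z_{2n-1}<z<z_{2n}<\cdots<z_4<z_2<z_0 ,
\]
each extension being obtained by applying $T$ to the chain at the previous stage: from $z<z_{2n}<z_{2n-2}$ one gets $z_{2n-1}<z_{2n+1}<z$, and from $z_{2n-1}<z_{2n+1}<z$ one gets $z<z_{2n+2}<z_{2n}$. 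This is exactly (\ref{z1z3zz2z0}).

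Finally I would identify the limit. The increasing sequence $(z_{2n-1})$ and the decreasing sequence $(z_{2n})$ are bounded, hence converge to limits $\ell_{\mathrm{odd}}\le z\le\ell_{\mathrm{even}}$, both lying in $(e,\infty)$. Passing to the limit in $z_{n+1}=T(z_n)$ and using the continuity of $T$ yields $\ell_{\mathrm{odd}}=y/\log\ell_{\mathrm{even}}$ and $\ell_{\mathrm{even}}=y/\log\ell_{\mathrm{odd}}$, whence $(\log\ell_{\mathrm{odd}})/\ell_{\mathrm{odd}}=(\log\ell_{\mathrm{even}})/\ell_{\mathrm{even}}$. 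Since $w\mapsto(\log w)/w$ is strictly decreasing on $(e,\infty)$, this forces $\ell_{\mathrm{odd}}=\ell_{\mathrm{even}}=:\ell$, and then $\ell\log\ell=y$ gives $\ell=z$ by injectivity of $y(\cdot)$. Hence both subsequences converge to $z$, which yields the nested-logarithm representation (\ref{logloglog}) and, by keeping the first term of the decreasing side, the bound $z(y)<z_2(y)=y/\log(y/\log y)$ of (\ref{loglog}). The only delicate point is this last step — forcing the even and odd limits to agree; I would avoid any quantitative contraction estimate for $T^2$ and instead rely on the strict monotonicity of $(\log w)/w$ on $(e,\infty)$, which makes the collapse $\ell_{\mathrm{odd}}=\ell_{\mathrm{even}}$ immediate.
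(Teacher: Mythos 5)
Your proof is correct and complete. Note that the paper itself gives no proof of this lemma at all: it is quoted from \cite{HANCLETAL}, so there is no internal argument to compare yours against. Your route --- realising $z(y)$ as the fixed point of the order-reversing map $T(w)=y/\log w$, running the alternating monotone iteration to get the interlacing chain $z_1<z_3<\cdots<z<\cdots<z_2<z_0$, and then collapsing the two subsequential limits via the strict decrease of $w\mapsto(\log w)/w$ on $(e,\infty)$ --- is sound at every step: the base inequalities $z<z_0$, $z_1<z$, $z<z_2<z_0$ use exactly $\log z>1$ and $z_1=y/\log y>e$, the induction is a clean application of the order reversal of $T$, and the identification $\ell_{\mathrm{odd}}=\ell_{\mathrm{even}}=z$ legitimately avoids any contraction estimate. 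This gives a self-contained justification of (\ref{z1z3zz2z0}), (\ref{logloglog}) and (\ref{loglog}) that the present paper only imports by citation.
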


\subsection{Proof of Theorem \ref{BAKERTYPE} and corollaries}

We have $a=c=$ and $d=b_2=e_2=b_3=e_3=0$, and thus we may apply Theorem \ref{BAKERAXIOM2} with
\begin{alignat*}{1}
f & =\frac{2}{c-dm}=2,\quad A=b_0+\frac{ae_0m}{c-dm}=b_0+e_0m,\\
B& =a+b_0+b_1+\frac{ae_1m}{c-dm}=1+b_0+b_1+e_1m,\\
C& =am+b_2+\frac{a(dm^2+e_2m)}{c-dm}=m, \quad D=b_0m+\frac{ae_0m^2}{c-dm}=b_0m+e_0m^2, \\
E& =(a+b_0+b_1)m+b_2+b_3+\frac{a((2e_0+e_1)m^2+(e_2+e_3)m)}{c-dm}=
(1+b_0+b_1)m+(2e_0+e_1)m^2
\end{alignat*}
for the numbers $\Theta_j:=e^{\alpha_j}$.

This completes the proof of Theorem \ref{BAKERTYPE} and Corollary \ref{Corollary2.2}.

\begin{proof}[Proof of Corollary \ref{Corollary2.3}] We start from the estimate (\ref{BAKERLOWER}), writing
\begin{equation}\label{kukka}
2e^{E}H^{1+\epsilon(H)}=
\hat H^{1+\epsilon(H)+\frac{1}{\log\hat H}\left(\log 2+E +m(1+\epsilon(H))\log(2m)\right)}.
\end{equation}
Next we estimate $\epsilon(H)$ by using the bound \ref{WEAKERe} for the values of the function $z$:
\begin{multline*}
\epsilon(H) \le A\frac{2\sqrt{\rho}}{\sqrt{\log\log H}}
+B\frac{2\rho}{\log \log H}
+\frac{C}{\log H}\log\left( \frac{2\rho \log H}{\log\log H}\right)
+\frac{D}{\log H}\sqrt{\log\left( \frac{2\rho \log H}{\log\log H}\right)}\\
\le
A\frac{2\sqrt{\rho}}{\sqrt{\log\log H}}
+B\frac{2\rho}{\log\log H}
+C\frac{\log\log H}{\log H}
+D\frac{\sqrt{\log\log H}}{\log H}
\end{multline*}
\begin{equation}\label{perhonen}
= \frac{1}{\sqrt{\log\log H}}\left(2\sqrt{\rho}A+B\frac{2\rho}{\sqrt{\log\log H}}
+C\frac{(\log\log H)^{3/2}}{\log H}+D\frac{\log\log H}{\log H}\right)
\end{equation}
Case a]. Assume $\log g_1\le \log g_2+\log g_4$, so that $b_1=0$. 
Because $g_4 \le g_2$ we have $b_0 \le 3e_0/7$. Now
\[
\frac{B}{\sqrt{\log\log H}} = \frac{1+b_0+b_1+e_1m}{\sqrt{\log\log H}}
\le \frac{1}{3me_0}+\frac{1}{7m}+\frac{\sqrt{e_1m}}{5},
\]
where we used \eqref{valinta} and \eqref{12me1}.
With the estimates $m \ge 2$, $e_0 \ge 3\sqrt{\log 2}$ and $e_1\le\ 2\log(1+g_3)+2\log(2)+1$ we obtain
\begin{equation}\label{caseaa}
B \frac{2\rho}{\sqrt{\log\log H}} 
\le 0.283+0.633\sqrt{m}+0.580\sqrt{m}\sqrt{\log(1+g_3)}.
\end{equation}
Case b]. Here we assume $\log g_1>\log g_2+\log g_4$. This time
\begin{align*}
\frac{B}{\sqrt{\log\log H}} & = \frac{b_0+e_1(m+1)-2\log(1+g_3)-2\log 2}{\sqrt{\log\log H}}
\le \frac{1}{7m}+\frac{\sqrt{e_1m}}{5}+\frac{\sqrt{e_1}}{5\sqrt m}.
\end{align*}
Estimating $m \ge 2$ and $e_1\le\ \log g_1 +\log(1+g_3)+2\log(2)+1$ gives us
\begin{equation}\label{casebee}
B\frac{2\rho}{\sqrt{\log\log H}} 
\le 0.594+0.633\sqrt m+(0.290+0.410\sqrt{m})\sqrt{\log(g_1(1+g_3))}.
\end{equation}
It's easy to see that the contribution of the terms in parenthesis of \eqref{kukka}, the terms with $C$ and $D$ in \eqref{perhonen} and the constants in \eqref{caseaa} and \eqref{casebee} is small,
less than $1$ for example. So we have shown
\begin{alignat*}{1}
a]& \quad \hat A\le 1+(3.036+7.084m)\sqrt{\log g_2}+0.633\sqrt{m}+0.580\sqrt{m}\sqrt{\log(1+g_3)}, \quad g_1 \le g_2g_4\\
b]& \quad \hat A\le 1+(3.036+7.084m)\sqrt{\log g_2}+0.633\sqrt m \\
& \qquad \qquad \qquad \qquad +(0.290+0.410\sqrt{m})\sqrt{\log(g_1(1+g_3))}, \quad g_1> g_2g_4.
\end{alignat*}
\end{proof}

\begin{proof}[Proof of Corollary \ref{Corollary2.4}]
We have
\[
M\le \hat h:=\prod_{i=1}^{m}h_i\le M^m,\quad h_i=\max\{1,|\beta_i|\},\quad i=0,1,...,m,
\]
where wlog we may suppose that $M=\underset{0\le i\le m}\max\{h_i\}=h_0$. Thus, by Corollary \ref{Corollary2.3} we have
\begin{multline*}
\left|\beta_0 e^{\gamma_0}+\beta_1 e^{\gamma_1}+...+\beta_m e^{\gamma_m}\right|= 
|e^{\gamma_0}|\left|\beta_0 +\beta_1 e^{\gamma_1-\gamma_0}+...+\beta_m e^{\gamma_m-\gamma_0}\right|\\
> \frac{|e^{\gamma_0}|}{\hat h^{1+\hat\epsilon(\hat h)}}
\ge \frac{|e^{\gamma_0}|}{h_1\cdots h_mM^{m\hat\epsilon(\hat h)}}
= \frac{M^{1-\hat \delta(M)}}{h_0h_1\cdots h_m},\quad 
\hat \delta(M):= m\hat\epsilon(\hat h)-\log(|e^{\gamma_0|})/\log M,
\end{multline*}
where
\[
\hat\epsilon(\hat h)=\hat A(\overline{\eta})/\sqrt{\log\log \hat h},\quad 
\overline{\eta}=\overline{\gamma}-\gamma_0\overline{1}=(0,\gamma_1-\gamma_0,...,\gamma_m-\gamma_0).
\]
Hence
\begin{equation*}
\hat B(\overline{\gamma})=1+m\hat A(\overline{\eta}).
\end{equation*}

Note that
\begin{equation}\label{g1g3g4}
g_1(\overline{\eta})\le g_1(\overline{\gamma}),\quad 
g_3(\overline{\eta})\le 2g_3(\overline{\gamma}),\quad
g_4(\overline{\eta})\le 1+g_1(\overline{\gamma}).
\end{equation}
First suppose that $m\ge 3$, and so
\begin{equation*}
g_1(\overline{\gamma})(g_3(\overline{\gamma})+1)\ge 3
\end{equation*}
because the coordinates of $\overline{\gamma}$ are distinct.
By the mean value theorem we can now further estimate
\begin{align}
\sqrt{\log(g_1(\overline{\gamma})(1+2g_3(\overline{\gamma})))}
& \le \sqrt{\log(g_1(\overline{\gamma})(1+g_3(\overline{\gamma})))}
+\frac{\log(\frac{g_1(\overline{\gamma})(1+2g_3(\overline{\gamma}))}{g_1(\overline{\gamma})(1+g_3(\overline{\gamma}))})}{2\sqrt{\log(g_1(\overline{\gamma})(1+g_3(\overline{\gamma})))}} \nonumber  \\
& \le \sqrt{\log(g_1(\overline{\gamma})(1+g_3(\overline{\gamma})))}+ \frac{\log \left(1+\frac{g_3(\overline{\gamma})}{2+g_3(\overline{\gamma})}\right)}{2\sqrt{\log(3)}}\\ 
& \le \sqrt{\log(g_1(\overline{\gamma})(1+g_3(\overline{\gamma})))}+0.331. \label{meanvalue}
\end{align}

Now we are ready to apply the estimates (\ref{ahatA}) and (\ref{bhatA}) of Corollary \ref{Corollary2.3}.\\
Case a].
By (\ref{ahatA}) and (\ref{g1g3g4}) we get
$$
\hat A(\overline{\eta})
\le 1+(3.036+7.084m)\sqrt{\log g_2(\overline{\eta})}+0.633\sqrt{m}
+0.580\sqrt{m}\sqrt{\log(1+g_3(\overline{\eta}))}
$$
$$
\le 1+0.633\sqrt{m}+(3.036+0.580\sqrt{m}+7.084m)\sqrt{\log(g_1(\overline{\gamma})(1+2g_3(\overline{\gamma})))}
$$
\begin{equation}\label{hatA633a}
\le 1+0.633\sqrt{m}
+(3.036+0.580\sqrt{m}+7.084m)
\left(\sqrt{\log(g_1(\overline{\gamma})(1+g_3(\overline{\gamma})))}+0.331\right),
\end{equation}
if $g_1(\overline{\eta})\le g_2(\overline{\eta})g_4(\overline{\eta})$.\\
Case b]. Similarly we get
\begin{equation}\label{hatA633b}
\hat A(\overline{\eta})
\le 1+0.633\sqrt{m}
+(3.326+0.41\sqrt{m}+7.084m)
\left(\sqrt{\log(g_1(\overline{\gamma})(1+g_3(\overline{\gamma})))}+0.331\right),
\end{equation}
if $g_1(\overline{\eta})> g_2(\overline{\eta})g_4(\overline{\eta})$.
We see that the estimate (\ref{hatA633a}) dominates (\ref{hatA633b})
implying
\begin{multline*}
\frac{\hat B(\overline{\gamma})}{m^2\sqrt{\log(g_1(\overline{\gamma})(1+g_3(\overline{\gamma})))}}\\
\le \frac{1+m\left(1+0.633\sqrt{m}
+(3.036+0.580\sqrt{m}+7.084m)
\left(\sqrt{\log(g_1(\overline{\gamma})(1+g_3(\overline{\gamma})))}+0.331\right)\right)}
{m^2\sqrt{\log(g_1(\overline{\gamma})(1+g_3(\overline{\gamma})))}}\\
\le \frac{1}{9\sqrt{\log 3}}
+\frac{1}{3\sqrt{\log 3}}
+\frac{0.633}{\sqrt{3\log 3}}
+\left(\frac{3.036}{3}+\frac{0.580}{\sqrt{3}}+7.084\right)
\left(1+\frac{0.331}{\sqrt{\log 3}}\right) < 12.
\end{multline*}

The case $m=2$ will be divided to three parts:
\begin{enumerate}
\item $g_1(\overline{\gamma})(1+g_3(\overline{\gamma}))=2$, and hence $\gamma_0,\gamma_1,\gamma_2\in\{0,1,-1\}$
\item $g_1(\overline{\gamma})(1+g_3(\overline{\gamma}))=3$, and hence $\gamma_0,\gamma_1,\gamma_2\in\{0,1,-1,2,-2\}$ (excluding the cases above) or
$\gamma_0,\gamma_1,\gamma_2\in\{0,1/2,-1/2\}$
\item $g_1(\overline{\gamma})(1+g_3(\overline{\gamma}))\ge 4$.
\end{enumerate}
The proof is easy to complete with simple calculations in all of this cases. Now we can turn to proof of (\ref{AKTHATM}). This is very straightforward. By  (\ref{g1g3g4}), we have
\begin{multline*}
e_0(\overline{\eta})^2\le 9\log g_2(\overline{\eta})+3.25\log g_4(\overline{\eta})\le 
9\log(2g_1(\overline{\gamma})(1+g_3(\overline{\gamma})))
+3.25\log(g_1(\overline{\gamma})(1+g_3(\overline{\gamma})))\\ 
\le 21.25\log(g_1(\overline{\gamma})(1+g_3(\overline{\gamma}))). 
\end{multline*}
Thus
\begin{equation*} 
\log\hat H_{0,AKT}(\overline{\eta})\le 
95.625m^2\log(g_1(\overline{\gamma})(1+g_3(\overline{\gamma}))) 
e^{191.25m^2\log(g_1(\overline{\gamma})(1+g_3(\overline{\gamma})))}.
\end{equation*}
\end{proof}

\subsection{Proofs of comparisons and examples}\label{tokattodistukset}
\begin{proof}[Proof of Theorem \label{vertailu}]
By further estimating the results of Corollary \ref{Corollary2.3} we get the bound for $\hat A_{AKT}(\overline{\alpha})$. The bound for $\log\hat H_{0,AKT}(\overline{\alpha})$ requires a bit more work. We have
\begin{multline}\label{e0alpha2} 
\log\hat H_{0,AKT}(\overline{\alpha})\le 
\frac{9}{2}m^2e_0(\overline{\alpha})^2e^{9m^2e_0(\overline{\alpha})^2}  \\ \le \frac{9}{2}m^2(9\log g_2(\overline{\alpha})+3.25\log g_4(\overline{\alpha})) 
e^{9m^2(9\log g_2(\overline{\alpha})+3.25\log g_4(\overline{\alpha}))}.
\end{multline}

Now, by (\ref{gggggg}), we obtain
\begin{equation*}
e_0(\overline{\alpha})^2\le 9\log g_2(\overline{\alpha})+3.25\log g_4(\overline{\alpha})
\le 12.25\log(2g_1(\overline{\alpha})\tilde g_3(\overline{\alpha})),
\end{equation*}
and hence
\begin{equation}\label{loghatH0} 
\log\hat H_{0,AKT}(\overline{\alpha})\le 
55.125m^2\log(2g_1(\overline{\alpha})\tilde g_3(\overline{\alpha})) 
e^{110.25m^2\log(2g_1(\overline{\alpha})\tilde g_3(\overline{\alpha}))}.
\end{equation}

Finally we prove that our estimate (\ref{AKTepsilon3}) is better that (\ref{Sepsilon2}). We have
\[
\hat A_{AKT}(\overline{\alpha})< m+13m\sqrt{\log(2g_1\tilde g_3)},
\]
where by the arithmetic-geometric inequality, we get
\[
m\sqrt{\log (2g_1\tilde g_3)}\le m^2+\frac{1}{4}\log (2g_1\tilde g_3).
\]
Thus  we may conclude that
\[
\hat A_{AKT}(\overline{\alpha})< 13m^2+m+\frac{13}{4}\log(2g_2\tilde g_3)
< \hat A_{SA}(\overline{\alpha}).
\]
\end{proof}

\begin{proof}[Proof of Example \ref{Example2.5}]
Now
\begin{alignat*}{3}
g_1&=\textrm{lcm}(y_1,\dots ,y_m)=1,\quad & g_2&=\max_{0\leq j\leq 
m}\{|x_j|+|y_j|\}=1+m,\\
g_3&=\max_{0\leq j\leq m}|\alpha_j|=m,& g_4&=\max_{1\leq j\leq 
m}\left\{1+\frac{|y_j|}{|x_j|}\right\}=2.
\end{alignat*}
Substituting, we obtain
\begin{alignat*}{3}
b_0&=\sqrt{\log(m+1)}+\frac{\log 2}{2\sqrt{\log (m+1)}},\quad & b_1&=0,\\
e_0&=3\sqrt{\log(m+1)}+\frac{\log 2}{2\sqrt{\log (m+1)}}, \quad & e_1&=\log 
2+\log(m+1)+1.
\end{alignat*}
Consequently
\begin{alignat*}{1}
A&=(3m+1)\sqrt{\log(m+1)}+\frac{(m+1)\log 2}{2\sqrt{\log(m+1)}},\\
B&=m\log(m+1)+(1+\log 2)m+\sqrt{\log(m+1)}+1+\frac{\log 2}{2\sqrt{\log(m+1)}},\\
C&=m,\\
D&=(3m^2+m)\sqrt{\log(m+1)}+\frac{(m^2+m)\log 2}{2\sqrt{\log(m+1)}},\\
E&=m^2\log(m+1)+(6m^2+m)\sqrt{\log(m+1)}+(1+\log 
2)m^2+m+\frac{(2m^2+m)\log 2}{2\sqrt{\log(m+1)}}.
\end{alignat*}
By imitating the proof of Corollary \ref{Corollary2.3} we get
\begin{align*}
& \epsilon(H) \log\log H  \le 2\sqrt{\rho}A+B\frac{2\rho}{\sqrt{\log\log H}}
+C\frac{(\log\log H)^{3/2}}{\log H}+D\frac{\log\log H}{\log H} \\
& \le
2\sqrt{\rho}\left((3m+1)\sqrt{\log(m+1)}+\frac{(m+1)\log 2}{2\sqrt{\log(m+1)}}\right) \\
& +
\frac{2\rho}{9m\sqrt{\log(m+1)}}
\left(m\log(m+1)+(1+\log 2)m+\sqrt{\log(m+1)}+1+\frac{\log 2}{2\sqrt{\log(m+1)}}\right) \\
& + \frac{1}{(81m^2\log(m+1))^2}\left(m+(3m^2+m)\sqrt{\log(m+1)}+\frac{(m^2+m)\log 
2}{2\sqrt{\log(m+1)}}\right) \\
&\le 0.962+0.670m+(2.252+6.072m)\sqrt{\log(m+1)}
\end{align*}
implying
\begin{equation*}
\hat A_{AKT}(\overline{\alpha}) 
\le  1+0.670m+(2.252+6.072m)\sqrt{\log(m+1)},
\end{equation*}
since the contribution of the other terms is extremely small.
Moreover, (\ref{e0alpha2}) implies
\begin{equation*}
\log\hat H_{0,AKT}(\overline{\alpha})= 
m^2(40.5\log(m+1)+9.850)e^{m^2(81\log(m+1)+19.699)}.
\end{equation*}
\end{proof}

\begin{proof}[Proof of Example \ref{Example2.7}]
Recall that the number $m+1$ of integral points of absolute value at most $g_3\le r$ satisfies
$$\pi\left(g_3-\frac{1}{\sqrt{2}}\right)^2\le m+1\le \pi\left(g_3+\frac{1}{\sqrt{2}}\right)^2.$$
Thus
\[
g_1=1,\qquad  2 \le g_2 \le \frac{\sqrt{m+1}}{\sqrt{\pi}}+\frac{1}{\sqrt{2}}+1 \le \sqrt{m+1},
\]
\[
g_3 = g_2-1, \qquad  g_4=2,
\]
where we used the fact that $m+1 \ge 9$.
Again, by working similarly as in Example \ref{Example2.5} one gets
\[
\hat A_{AKT}(\overline{\alpha}) \le 1+(1,596+4,294m)\sqrt{\log(m+1)}\]
and
\begin{equation*}
\log\hat H_{0,AKT}(\overline{\alpha})= 
m^2(20,25\log(m+1)+9,604)e^{m^2(40.5\log(m+1)+19,207)}.
\end{equation*}
\end{proof}


\begin{thebibliography}{99}

\bibitem{stirling} M. Abramowitz, I. A. Stegun, Handbook of mathematical functions with formulas, graphs, and mathematical tables, National Bureau Of Standards Applied Mathematics Series, 55, 
Washington, D.C, 1964.

\bibitem{BAKER} A. Baker, On some Diophantine inequalities involving the exponential function,
Canad. J. Math. 17 (1965), 616-626.

\bibitem{BOMBIERI} E. Bombieri, On $G$-functions, Recent progress in analytic number theory,
Vol. 2 (Durham, 1979), pp. 1--67, Academic Press, London-New York, 1981. 

\bibitem{FELDMAN} N. I. Fel'dman, Yu. V. Nesterenko, Transcendental Numbers, Number Theory IV, 1--345, Encyclopaedia Math. Sci., 44, Springer, Berlin, 1998.

\bibitem{galochkin} A. I. Galochkin, The sharpening of the bounds for certain linear forms, Math. Notes 20 (1) (1976), 35--45. 

\bibitem{HANCLETAL} J. Han\v{c}l, M. Leinonen, K. Lepp\"al\"a and T. Matala-aho,
Explicit irrationality measures for continued fractions,  J. Number Theory 132 (2012), 1758-1769.

\bibitem{LEINONEN} L. Leinonen, Baker-Type Estimates for Linear Forms in the Values of q-series, Submitted.

\bibitem{MAHLER}  K. Mahler,  On a paper by A. Baker on the approximation of rational powers of e.  Acta Arith. 27 (1975), 61–87.


\bibitem{TM}  T. Matala-aho, On Baker type lower bounds for linear forms, Submitted.

\bibitem{ROSSER} J. B. Rosser, L. Schoenfeld, Approximate formulas for some functions of prime numbers, Illinois J. Math. 6 (1962), 64--94.

\bibitem{SANKILAMPI}  O. Sankilampi, On the linear independence measures of the values of 
some $q$-hypergeometric and hypergeometric functions and some applications, Thesis 2006, Univ. Oulu.


\bibitem{siegel} C.L Siegel, Transcendental numbers, Annals of mathematics studies no. 16, Princeton university press, N. J., Princeton, 1949.


\bibitem{VAAZUD}V\"a\"an\"anen and Zudilin, Baker-Type Estimates for linear Forms
in the Values of $q$-Series, Canad. Math. Bull. 48 (2005), 147-160.


\bibitem{ZUDILIN}  W. Zudilin, Lower bounds for polynomials in the values of certain entire functions.  Sb. Math. 187 (1996), no. 12, 1791–1818.


\end{thebibliography}
\end{document}